\newtheorem{theorem}{Theorem}[section]
\theoremstyle{definition}
\newtheorem{definition}[theorem]{Definition}
\newtheorem{proposition}[theorem]{Proposition}
\newtheorem{example}[theorem]{Example}
\newtheorem{remark}[theorem]{Remark}
\newtheorem{lemma}[theorem]{Lemma}
\newtheorem{corollary}[theorem]{Corollary}
\newtheorem{conjecture}[theorem]{Conjecture}
\numberwithin{equation}{section}
\begin{document}

\title{A note on some high-dimensional handlebodies}

\author{Geunyoung Kim}
\address{Department of Mathematics \& Statistics, McMaster University, Hamilton, Ontario, Canada}
\curraddr{}
\email{kimg68@mcmaster.ca}
\thanks{}
\keywords{}
\date{}
\dedicatory{}

\begin{abstract}
For $k \geq 0$ and $n \geq 2k+1$, we show that every $n$-dimensional $k$-handlebody is the product of a $2k$-dimensional $k$-handlebody and the standard $(n-2k)$-ball. For $k \geq 2$ and $n \geq 2k$, we introduce $(n,k)$-Kirby diagrams for some $n$-dimensional $k$-handlebodies, where $(4,2)$-Kirby diagrams correspond to the original Kirby diagrams for $4$-dimensional $2$-handlebodies.
\end{abstract}

\maketitle
\addtocontents{toc}{\protect\setcounter{tocdepth}{1}}

\section{Introduction}

We work in the smooth category throughout. Let $B^m$ denote the standard $m$-ball, where $0\in B^n$ is the origin. An \textit{$n$-dimensional $k$-handle} is the product $B^{k}\times B^{n-k}$, and we call $k$ the \textit{index} of the handle. 

Let $X$ be an $n$-manifold with boundary, and let $\phi:S^{k-1}\times B^k\hookrightarrow \partial X$ be an embedding. Here, we refer to $S^{k-1}\times B^{n-k}\subset B^k\times B^{n-k}$ as the \textit{attaching region} of the handle and call $\phi$ the \textit{attaching map}. The quotient manifold \[X'=X\cup_\phi(B^k\times B^{n-k})=\frac{X\coprod (B^k\times B^{n-k})} {y\sim\phi(y)},\quad (y\in S^{k-1}\times B^k)\] is called the \textit{manifold obtained from $X$ by attaching a $k$-handle along $\phi$}. 

Note that the diffeomorphism type of $X'$ is determined by the isotopy class of $\phi.$ By the tubular neighborhood theorem \cite{guillemin2010differential}, an embedding $\phi:S^{k-1}\times B^{n-k}\hookrightarrow \partial X$ is uniquely determined up to isotopy by the attaching sphere $S=\phi(S^{k-1}\times \{0\})$ of the $k$-handle and a framing of $S$, which is an identification of the normal bundle $\nu S$ of $S$ with $S^{k-1}\times \mathbb{R}^{n-k}$.
After choosing a fixed reference framing, the set of framings can be canonically identified with $\pi_{k-1}(GL(n-k))\cong \pi_{k-1}(O(n-k))$ (see \cite{gompf20234}). 

An \textit{$n$-dimensional $k$-handlebody} $X$ is an $n$-manifold obtained from the standard $n$-ball $B^n$ by attaching handles up to index $k$, i.e., $X$ admits a handle decomposition \[B^n=X_0\subset X_1\subset \dots\subset X_k=X,\] where $X_i$ is obtained from $X_{i-1}$ by attaching $i$-handles. Note that an $n$-dimensional $k$-handlebody is not merely the union of a $0$-handle and $k$-handles. 

The following is our main theorem.

\begin{restatable*}{theorem}{productstructureonhandlebodytwo}\label{thm: product structure on handlebodies}
     Fix $k\geq0$ and $n\geq2k+1$. Let $X_k$ be an $n$-dimensional $k$-handlebody. Then there exists a $2k$-dimensional $k$-handlebody $Y_k$ such that $X_k$ is diffeomorphic to $Y_k\times B^{n-2k}$.
\end{restatable*}

\begin{remark}
    In \autoref{thm: product structure on handlebodies}, there may exist non-diffeomorphic $2k$-dimensional $k$-handlebodies $Y$ and $Y'$ such that $Y\times B^1\cong Y'\times B^1$, which implies that $Y\times B^{n-2k}\cong Y' \times B^{n-2k}$.
    \begin{enumerate}
        \item Let $Y$ be the once-punctured $S^k\times S^k$, and let $Y'$ be $(S^k\times B^k)\natural(S^k\times B^{k})$. Then $\partial Y\cong S^{2k-1}$ and $\partial Y'\cong (S^k\times S^{k-1})\#(S^k\times S^{k-1})$, so $Y$ and $Y'$ are not diffeomorphic. However, \[Y\times B^{1}\cong (S^k\times B^{k+1})\natural(S^k\times B^{k+1})\cong Y'\times B^{1}.\]
        \item Let $Y$ be a Mazur manifold, a contractible $4$-manifold that is not homeomorphic to $B^4$, with a handle decomposition consisting of a $0$-handle, a $1$-handle, and a $2$-handle, where the $1$- and $2$- handles algebraically cancel but not geometrically (see \cite{mazur1961note}). Note that $\partial Y$ is a non-simply connected homology $3$-sphere. Let $Y'$ be a $4$-manifold diffeomorphic to $B^4$ with a $0$-handle, and a $1$-handle, and a $2$-handle, where the $1$- and $2$-handles geometrically cancel. Then $Y\times B^1\cong B^5\cong Y'\times B^1$ but $Y$ and $Y'$ are not diffeomorphic.
    \end{enumerate}
\end{remark}

\begin{remark}
    In \autoref{sec: generalized Kirby diagrams}, we define $(n,k)$-Kirby diagrams for some $n$-dimensional $k$-handlebodies, generalizing the original Kirby diagrams for $4$-dimensional $2$-handlebodies. The original Kirby diagrams correspond to $(4,2)$-Kirby diagrams (see \cite{kirby1978calculus,kirby2006topology, gompf20234, akbulut20164} for more details on Kirby diagrams of $4$-manifolds. We use \autoref{thm: product structure on handlebodies} to perform generalized Kirby calculus on $(n,k)$-Kirby diagrams and to prove \autoref{thm: classification of handlebodies} below. Another application is relating $(n,2)$-Kirby diagrams to $(4,2)$-Kirby diagrams, allowing the $n$-manifolds represented by $(n,2)$-Kirby diagrams to be interpreted in terms of the $4$-manifolds represented by $(4,2)$-Kirby diagrams (see \autoref{rem: relation between (4,2) and (n,2)} and \autoref{A6}).
\end{remark}

\begin{restatable*}{theorem}{classificationofcertainhandlebodies}\label{thm: classification of handlebodies}
    Fix $k\geq2$ and $n\geq2k+1$. Let $X$ be an $n$-dimensional $k$-handlebody obtained from $S^{k-1}\times B^{n-k+1}$ by attaching $m$ $k$-handles, i.e., $X$ is an $n$-dimensional $k$-handlebody with a $0$-handle, a $(k-1)$-handle, and $m$ $k$-handles for some $m\geq 0$. Then $X$ is diffeomorphic to $M_{\mathcal{K}(p;a,b)}$ for some $(n,k)$-Kirby diagram $\mathcal{K}(p;a,b)$ in the right of \autoref{A5}.
\end{restatable*}

\begin{corollary}\label{cor: double of handle bodies}
    Fix $k\geq0$ and $n\geq2k+1$. Let $X_k$ be an $n$-dimensional $k$-handlebody. Then there exists an $(n-1)$-dimensional $k$-handlebody $Z_k$ such that \[X_k\cong Z_k\times B^{1}.\] In particular, the boundary $\partial X_k$ of $X_k$ is diffeomorphic to the double $DZ_k$ of $Z_k$.
    \begin{proof}
        By \autoref{thm: product structure on handlebodies}, we have \[X_k\cong Y_k\times B^{n-2k}\cong Y_k\times B^{n-2k-1}\times B^1.\] Let $Z_k=Y_k\times B^{n-2k-1}$, which is an $(n-1)$-dimensional $k$-handlebody. Then \[X_k\cong Z_k\times B^1.\] Thus, \[\partial X\cong\partial (Z_k\times B^1)\cong Z_k\cup_{id}\overline{Z_k}=DZ_k,\] where $id$ is the identity map on $\partial Z_k$.
    \end{proof}
\end{corollary}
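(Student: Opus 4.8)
The plan is to derive this directly from \autoref{thm: product structure on handlebodies}, which supplies the only substantial ingredient. First I would apply that theorem to $X_k$ to obtain a $2k$-dimensional $k$-handlebody $Y_k$ together with a diffeomorphism $X_k\cong Y_k\times B^{n-2k}$. The hypothesis $n\geq 2k+1$ guarantees $n-2k\geq 1$, so I can factor the ball as $B^{n-2k}\cong B^{n-2k-1}\times B^1$ and regroup the product as $(Y_k\times B^{n-2k-1})\times B^1$.

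Next I would set $Z_k=Y_k\times B^{n-2k-1}$ and verify that it qualifies as an $(n-1)$-dimensional $k$-handlebody. This is the one point needing a small check: taking the product of a handle decomposition with a ball preserves indices. Concretely, if $Y_k$ is built from handles $B^i\times B^{2k-i}$ with $i\leq k$, then multiplying by $B^{n-2k-1}$ turns each such handle into $B^i\times B^{2k-i}\times B^{n-2k-1}=B^i\times B^{(n-1)-i}$, i.e.\ an $(n-1)$-dimensional $i$-handle of the \emph{same} index $i\leq k$, with attaching maps extended by the identity on the $B^{n-2k-1}$ factor. Hence $Z_k$ inherits a handle decomposition with handles up to index $k$ in dimension $n-1$, and by construction $X_k\cong Z_k\times B^1$, which is the first assertion.

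Finally, for the boundary statement I would compute $\partial(Z_k\times B^1)$ directly. Writing $B^1=[-1,1]$, the boundary of the product decomposes as $\partial(Z_k\times[-1,1])=(Z_k\times\{-1,1\})\cup(\partial Z_k\times[-1,1])$, where the two solid pieces $Z_k\times\{\pm1\}$ are two copies of $Z_k$ and the remaining piece $\partial Z_k\times[-1,1]$ is a collar of $\partial Z_k$. After smoothing the corner along $\partial Z_k\times\{\pm1\}$ and collapsing the collar, this is exactly two copies of $Z_k$ glued by the identity along their common boundary $\partial Z_k$, namely the double $DZ_k$.

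I do not anticipate a genuine obstacle: the mathematical content is entirely absorbed into \autoref{thm: product structure on handlebodies}. The only care required is the routine bookkeeping just described, that a product with a ball preserves handle indices (so that $Z_k$ is again a $k$-handlebody) and yields, on the boundary of the extra $B^1$-factor, the standard identity $\partial(W\times B^1)\cong DW$ for a compact manifold $W$ with boundary, which needs nothing beyond corner-smoothing.
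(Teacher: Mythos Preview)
Your proposal is correct and follows essentially the same route as the paper: apply \autoref{thm: product structure on handlebodies} to write $X_k\cong Y_k\times B^{n-2k}$, peel off one $B^1$-factor, set $Z_k=Y_k\times B^{n-2k-1}$, and read off $\partial(Z_k\times B^1)\cong DZ_k$. You even supply a bit more justification than the paper does for why $Z_k$ is still a $k$-handlebody and for the double identification, but the argument is the same.
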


Thus, for example, the boundary of a $5$-dimensional $2$-handlebody is the double of a $4$-dimensional $2$-handlebody.

\begin{corollary}
    Fix $k\geq0$ and $n\geq2k+2$. Let $X_k$ be an $n$-dimensional $k$-handlebody. Then there exists an $(n-2)$-dimensional $k$-handlebody $W_k$ such that \[X_k\cong W_k\times B^{2}.\] In particular, the boundary $\partial X_k$ of $X_k$ admits an open book decomposition, where the binding is $\partial W_k$ and each page is $W_k$.
\begin{proof}
    By \autoref{thm: product structure on handlebodies}, we have \[X_k\cong Y_k\times B^{n-2k}\cong Y_k\times B^{n-2k-2}\times B^2.\] Let $W_k=Y_k\times B^{n-2k-2}$, which is an $(n-2)$-dimensional $k$-handlebody. Then \[X_k\cong W_k\times B^2.\] Furthermore, \[\partial X_k\cong \partial(W_k\times B^2)=(\partial W_k\times B^2)\cup(W_k\times S^1).\] Thus, $\partial X_k$ admits a natural open book decomposition, where the binding is $\partial W_k$ and each page is $W_k$.
    \end{proof}
\end{corollary}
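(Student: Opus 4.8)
The plan is to apply \autoref{thm: product structure on handlebodies} directly and then repackage the resulting ball factor. Since $n\geq 2k+2$, the theorem produces a $2k$-dimensional $k$-handlebody $Y_k$ with $X_k\cong Y_k\times B^{n-2k}$, and here the exponent satisfies $n-2k\geq 2$. First I would split off two of the ball factors, writing $B^{n-2k}\cong B^{n-2k-2}\times B^{2}$ and setting $W_k:=Y_k\times B^{n-2k-2}$, so that $X_k\cong W_k\times B^{2}$.

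The first point to check is that $W_k$ is genuinely an $(n-2)$-dimensional $k$-handlebody. This follows because taking the product of a handle decomposition with $B^{1}$ turns each $i$-handle $B^{i}\times B^{m-i}$ into an $i$-handle $B^{i}\times B^{m-i+1}$ of one higher ambient dimension, preserving both the index and the attaching data up to the evident stabilization; iterating this $n-2k-2$ times shows that $Y_k\times B^{n-2k-2}$ admits a handle decomposition with handles of index at most $k$ built on a single $0$-handle. Hence $W_k$ is an $(n-2)$-dimensional $k$-handlebody and $X_k\cong W_k\times B^{2}$, which proves the first assertion.

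For the open book statement I would compute the boundary of a product with $B^{2}$. Writing $\partial(W_k\times B^{2})=(W_k\times S^{1})\cup_{\partial W_k\times S^{1}}(\partial W_k\times B^{2})$, I would exhibit the canonical open book structure: the binding is $\partial W_k\times\{0\}$, where $0$ is the center of $B^{2}$, a codimension-two submanifold with trivial normal bundle, and the fibration of the complement onto $S^{1}$ is given by projection to $S^{1}$ on the first piece and by the angular coordinate of $B^{2}$ on the second piece. These two maps agree on the overlap $\partial W_k\times S^{1}$, and near the binding the fibration is exactly the angular coordinate of the normal $B^{2}$ factor, as an open book requires. The closure of the fiber over an angle $\theta$ is $(W_k\times\{\theta\})\cup(\partial W_k\times\{re^{i\theta}:0\le r\le 1\})$, which is diffeomorphic to $W_k$ after smoothing corners, so each page is $W_k$ with boundary the binding $\partial W_k$.

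I expect the only genuinely delicate step to be the verification that the two pieces of the fibration glue to a \emph{smooth} open book, that is, that the corner along $\partial W_k\times S^{1}$ can be smoothed compatibly with the fibration and that the normal framing of the binding is the standard one. The handlebody claim for $W_k$ and the set-theoretic splitting of the boundary are routine by comparison.
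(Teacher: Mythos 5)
Your proposal is correct and follows essentially the same route as the paper: apply \autoref{thm: product structure on handlebodies}, split $B^{n-2k}\cong B^{n-2k-2}\times B^2$, set $W_k=Y_k\times B^{n-2k-2}$, and read off the open book from $\partial(W_k\times B^2)=(\partial W_k\times B^2)\cup(W_k\times S^1)$. Your extra verifications (that $W_k$ is indeed an $(n-2)$-dimensional $k$-handlebody, and that the fibration glues smoothly along $\partial W_k\times S^1$) are standard details the paper leaves implicit, not a different argument.
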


For example, the boundary of a $6$-dimensional $2$-handlebody admits an open book decomposition, where the binding is the boundary of a $4$-dimensional $2$-handlebody, and each page is the $4$-dimensional $2$-handlebody. 

For instance, let $X=M\times B^2$ be a $6$-dimensional $2$-handlebody, where $M$ is a Mazur manifold with a $0$-handle, a $1$-handle, and a $2$-handle. Since $X=M\times B^2\cong B^6$, the boundary $\partial X$ admits a natural open book decomposition of $S^5$, whose binding is the non-simply connected homology $3$-sphere $\partial M$, and each page is the contractible $4$-manifold $M$.

\begin{remark}
One may ask whether \autoref{cor: double of handle bodies} holds when $n\leq 2k$. The answer is no. 

Consider the once-punctured $S^{n-k}\times S^k$, denoted by $X$. This is an $n$-dimensional $k$-handlebody obtained from $B^n$ by attaching an $(n-k)$-handle along the unknotted $(n-k-1)$-sphere $A$ and a $k$-handle along the unknotted $(k-1)$-sphere $B$. Specifically, \[A\cup B=(S^{n-k-1}\times \{0\})\cup (\{0\}\times S^{k-1})\subset (S^{n-k-1}\times B^{k})\cup (B^{n-k}\times S^{k-1})=\partial (B^{n-k}\times B^{k})=\partial B^{n}=S^{n-1}.\] For example, when $n=4$ and $k=2$, $A\cup B$ is the Hopf link in $S^3$. 

Now, observe the homology group $H_{n-k}(X)$. It satisfies 
\begin{equation*}
H_{n-k}(X)\cong
    \begin{cases}
        \mathbb{Z}^2&\hspace{5mm}\text{if}\;\; n=2k\\
        \mathbb{Z} &\hspace{5mm}\text{if}\;\; n<2k.
    \end{cases}
\end{equation*}
Suppose that there exists an $(n-1)$-dimensional $k$-handlebody $Z$ such that $X\cong Z\times B^1$. This would imply that $Z$ is homotopy equivalent to $X$, meaning that $H_{n-k}(Z)$ is non-trivial. Consequently, $H_{n-k}(DZ)$ must also be non-trivial. However, this contradicts \[H_{n-k}(\partial X)=H_{n-k}(DZ),\] which must be trivial since $\partial X=S^{n-1}$.
\end{remark}

\begin{conjecture}\label{conj}
    Let $X$ and $X'$ be $(2k+1)$-dimensional $k$-handlebodies. If their boundaries $\partial X$ and $\partial X'$ are diffeomorphic, then $X$ and $X'$ are diffeomorphic.
\end{conjecture}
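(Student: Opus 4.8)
The plan is to reduce the conjecture, via the product structure, to a statement about fillings and then attack it with subcritical handle theory and surgery. By \autoref{thm: product structure on handlebodies} we may write $X\cong Y\times B^1$ and $X'\cong Y'\times B^1$ for $2k$-dimensional $k$-handlebodies $Y,Y'$, and by \autoref{cor: double of handle bodies} we have $\partial X\cong DY$ and $\partial X'\cong DY'$. Thus the hypothesis becomes $DY\cong DY'$ and the goal becomes $Y\times B^1\cong Y'\times B^1$. I emphasize at the outset that one should \emph{not} try to prove the stronger statement $Y\cong Y'$: the examples in the Remark following \autoref{thm: product structure on handlebodies} have $DY\cong DY'$ (since $Y\times B^1\cong Y'\times B^1$) while $Y\not\cong Y'$ (indeed $\partial Y\not\cong\partial Y'$), so the $2k$-dimensional core is genuinely not recovered. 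The whole point is that crossing with $B^1$ should wash out exactly the data by which $Y$ and $Y'$ differ.

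The mechanism I would exploit is subcriticality. In $X$ every handle has index $j\le k<\tfrac{2k+1}{2}$, and its attaching sphere $S^{j-1}$ sits in the $2k$-dimensional boundary of the previous stage with $2(j-1)+2=2j\le 2k$. This is precisely the range in which a generic homotopy of embeddings has an embedded track, so homotopic attaching spheres are isotopic and the normal framings lie in the stable groups $\pi_{j-1}(O(2k+1-j))$; this is what unknots the Hopf linking that distinguishes the two cores in the Remark's first example. Hence the diffeomorphism type of a $(2k+1)$-dimensional $k$-handlebody should be governed by homotopy-theoretic attaching data rather than by embedded knotting. In parallel, since $X\simeq Y$ has the homotopy type of a complex of dimension $\le k$, Poincar\'e--Lefschetz duality gives $H_i(X,\partial X)\cong H^{2k+1-i}(X)=0$ for $i\le k$, so $H_i(\partial X)\cong H_i(X)$ for $i\le k-1$ and $H_k(X)$ is a quotient of $H_k(\partial X)$. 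Thus a diffeomorphism $\partial X\cong\partial X'$ forces the homology (and, with more care, the $\pi_1$ and the low-index handle data) of the two fillings to agree. The concrete program is then: (a) use duality to match the handle data of $X$ and $X'$; (b) build an $h$-cobordism rel boundary between $X$ and $X'$ by trading and sliding subcritical handles; (c) invoke the $s$-cobordism theorem, valid since $\dim X=2k+1\ge 5$, to promote it to a diffeomorphism.

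The hard part is the passage from boundary data to a genuine diffeomorphism, and it splits into two genuinely different difficulties. First, the base case $k=2$ (so $n=5$) is essentially four-dimensional: the boundary $\partial X$ is a $4$-manifold, and every handle slide, Whitney move, or $h$-cobordism manipulation used in steps (b)--(c) takes place against this $4$-dimensional background, where the Whitney trick and the smooth $h$-cobordism theorem fail. I expect this case to be at least as hard as open problems in smooth $4$-manifold topology, and it is the most likely place for the conjecture to be subtle or even to fail. Second, even for large $k$, where high-dimensional surgery and the $s$-cobordism theorem are available, it is not clear that $\partial X$ determines the \emph{full} attaching data of the filling: the computation above pins down $H_i(X)$ only for $i\le k-1$ and realizes $H_k(X)$ as a quotient of $H_k(\partial X)$, leaving the exact middle-dimensional data, the framings, and the torsion linking information only partially constrained. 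Closing this gap---showing that two subcritical fillings of the same $2k$-manifold that agree to this extent are already $h$-cobordant rel boundary---is the crux, and is where I would expect to need either a dedicated uniqueness theorem for subcritical fillings or, for $k=2$, new four-dimensional input.
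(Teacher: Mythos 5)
Your ``proof'' is not a proof, and to be fair you never claim it is: the statement you were given is \autoref{conj}, which the paper itself leaves open. There is no proof in the paper to compare against. The only evidence the paper records is (i) the conjecture is trivially true for $k=0,1$, and (ii) for $k\geq3$, Lawson's theorem gives exactly the \emph{stable} version: $X$ and $X'$ with diffeomorphic boundaries satisfy $X\natural(\natural^m(S^k\times B^{k+1}))\cong X'\natural(\natural^m(S^k\times B^{k+1}))$ for some $m\geq0$. Your program stalls at precisely the point where Lawson's does. Your step (a) --- matching homology and low-index handle data via Poincar\'e--Lefschetz duality --- is essentially the input already present in Lawson's stable classification; the missing content of the conjecture is destabilization, i.e., showing the $h$-cobordism rel boundary in your step (b) exists \emph{without} first connect-summing with copies of $S^k\times B^{k+1}$. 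Nothing in your write-up addresses how to cancel those summands, and you candidly say so (``the crux,'' ``a dedicated uniqueness theorem for subcritical fillings''). So the verdict is: genuine gap, and the gap is the conjecture itself.

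That said, the parts of your reduction that can be checked against the paper are correct and well aligned with it. The rewriting of the problem as ``$DY\cong DY'$ implies $Y\times B^1\cong Y'\times B^1$'' via \autoref{thm: product structure on handlebodies} and \autoref{cor: double of handle bodies} is exactly the paper's framework, and your warning not to aim for $Y\cong Y'$ is precisely the content of the paper's Remark (the once-punctured $S^k\times S^k$ versus $(S^k\times B^k)\natural(S^k\times B^k)$, and the Mazur manifold versus $B^4$): the $2k$-dimensional core is genuinely not determined. Your identification of $k=2$ (where $n=5$ and all manipulations happen against a smooth $4$-dimensional background) as a separate and possibly harder case is also sound; note the paper's remark that Lawson's stable result is only quoted for $k\geq3$, consistent with your concern. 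Two small technical cautions if you pursue this: the duality $H_i(X,\partial X)\cong H^{2k+1-i}(X)$ requires orientability or twisted coefficients, and $k$-handlebodies need not be orientable ($1$-handles can be attached nonorientably since $\pi_0(O(n-1))\cong\mathbb{Z}_2$); and ``homotopic attaching spheres are isotopic'' in your subcriticality paragraph needs the codimension count of the paper's \autoref{thm: Whitney embedding theorems}, which for a $k$-handle attached along $S^{k-1}$ in a $2k$-dimensional boundary sits exactly at the borderline $2(k-1)+2=2k$, not safely inside it --- this borderline middle-dimensional attaching data is exactly the information your step (b) would have to control.
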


\begin{remark}\hfill
    \begin{enumerate}
        \item When $k=0,1$, \autoref{conj} is obviously true.
        \item When $k\geq3$, Lawson \cite{lawson1978open} showed that two $(2k+1)$-dimensional $k$-handlebodies $X$ and $X'$ with diffeomorphic boundaries are stably diffeomorphic. That is, there exists an integer $m\geq0$ such that \[X\natural \left(\natural^m\left(S^k\times B^{k+1}\right)\right)\cong X'\natural \left(\natural^m\left(S^k\times B^{k+1}\right)\right).\]
    \end{enumerate}
\end{remark}

\begin{remark}
    Let $X$ be a closed $(2k+1)$-manifold, and let $f:X\rightarrow [0,2k+1]$ be a self-indexing Morse function with a single index $0$ critical point and a single index $2k+1$ critical point. Then $X$ decomposes as \[X=X_1\cup_{\Sigma}X_2,\] where \[X_1=f^{-1}\left(\left[0,\frac{2k+1}{2}\right]\right)\quad \text{and}\quad X_2=f^{-1}\left(\left[\frac{2k+1}{2},2k+1\right]\right)\] are $(2k+1)$-dimensional $k$-handlebodies, and \[\Sigma=X_1\cap X_2=f^{-1}\left(\frac{2k+1}{2}\right).\] Note that $\Sigma$ is the double of a $2k$-dimensional $k$-handlebody by \autoref{cor: double of handle bodies}. It remains unclear whether $X_1$ and $X_2$ are necessarily diffeomorphic. 
    
    Lawson showed that every closed, orientable $(2k+1)$-manifold $X$ can be obtained by gluing two diffeomorphic $(2k+1)$-dimensional $k$-handlebodies when $k\geq 2$ (see \cite{lawson1978decomposing} for the case $k=2$ and \cite{lawson1978open} for $k\geq3$). However, this does not necessarily imply that $X_1$ and $X_2$ are always diffeomorphic. If \autoref{conj} is true, then Lawson's theorem follows immediately. 
    
    For $k=1$, $X_1$ and $X_2$ are always diffeomorphic, and the decomposition $X=X_1\cup X_2$ is known as a \textit{Heegaard splitting} of the closed $3$-manifold $X$. See \cite{reidemeister1933dreidimensionalen,singer1933three} for more details.
\end{remark}

\subsection*{Organization}In \autoref{sec: main theorems}, we prove \autoref{thm: product structure on handlebodies}. In \autoref{sec: generalized Kirby diagrams}, for $k\geq2$ and $n\geq2k$, we introduce $(n,k)$-Kirby diagrams for some $n$-dimensional $k$-handlebodies and demonstrate how these diagrams can be represented as $(4,2)$-Kirby diagrams. We then describe isotopies, handle slides, and creation/annihilation of canceling pairs within $(n,k)$-Kirby diagrams. These moves will play a crucial role in proving \autoref{thm: classification of handlebodies}. Additionally, we provide numerous examples of $(n,k)$-Kirby diagrams to illustrate these concepts.

\subsection*{Acknowledgements}The author would like to thank his doctoral advisor, David Gay, for valuable discussions as well as Maggie Miller and Mark Powell for their helpful comments. Much of this work was conducted at the University of Georgia and was partially supported by National Science Foundation grant DMS-2005554 ``Smooth $4$--Manifolds: $2$--, $3$--, $5$-- and $6$--Dimensional Perspectives''. The author also thanks Ian Hambleton and Patrick Naylor for frequently listening to his questions.

\section{Proof of \autoref{thm: product structure on handlebodies}}\label{sec: main theorems}
In order to prove \autoref{thm: product structure on handlebodies}, we use \autoref{lem: attaching map is isotopic to a product map} and an induction. Since \autoref{thm: product structure on handlebodies} is obvious when $k=0$, we fix $k\geq1$ and $n\geq2k+1$ . 

Consider an $n$-dimensional $(k-1)$-handlebody $X_{k-1}$ such that \[X_{k-1}=Y_{k-1}\times B^{n-2k}\] for some $2k$-dimensional $(k-1)$-handlebody $Y_{k-1}$. Let \[\Phi:S^{k-1}\times B^k\times B^{n-2k}\hookrightarrow\partial X_{k-1}\] be an attaching map of an $n$-dimensional $k$-handle, where the attaching region $S^{k-1}\times B^{n-k}$ of the $k$-handle is identified with $S^{k-1}\times B^k\times B^{n-2k}$. 

In \autoref{lem: attaching map is isotopic to a product map}, we show that there exists an attaching map of a $2k$-dimensional $k$-handle \[\phi:S^{k-1}\times B^{k}\hookrightarrow\partial Y_{k-1}\] such that \[X_k=X_{k-1}\cup_\Phi (B^k\times B^k\times  B^{n-2k})\cong(Y_{k-1}\cup_\phi (B^k\times B^{k}))\times B^{n-2k}=Y_k\times B^{n-2k}.\]

The key lemma used to prove \autoref{lem: attaching map is isotopic to a product map} is \autoref{lem: nicely embedded attaching spheres }, which states that the attaching sphere \[\Sigma=\Phi(S^{k-1}\times \{0\}\times\{0\})\] of the $k$-handle can be isotopic into a nice subspace \[\partial Y_{k-1}\times\{0\}\subset\partial X_{k-1},\] where \[\partial Y_{k-1}\times\{0\}\subset \partial Y_{k-1}\times B^{n-2k}\subset (\partial Y_{k-1}\times B^{n-2k}) \cup (Y_{k-1}\times S^{n-2k-1})\subset\partial(Y_{k-1}\times B^{n-2k})=\partial X_{k-1}.\] 

Consider \[\Sigma=\Phi(S^{k-1}\times\{0\}\times\{0\})=\Sigma_1\cup \Sigma_2\] as the union of two properly embedded $(k-1)$-manifolds $\Sigma_1$ and $\Sigma_2$, where \[\Sigma_1=\Sigma\cap(\partial Y_{k-1}\times B^{n-2k}) \quad\text{and}\quad \Sigma_2=\Sigma\cap(Y_{k-1}\times S^{n-2k-1}).\] 

In \autoref{lem: homotopic to the boundary of a handlebody}, we first show that $\Sigma_2$ is homotopic into \[\partial Y_{k-1}\times S^{n-2k-1}=\partial(Y_{k-1}\times S^{n-2k-1}),\] so we can assume that $\Sigma$ is homotopic into $\partial Y_{k-1}\times B^{n-2k}$ because \[\Sigma_1\subset \partial Y_{k-1}\times B^{n-2k} \quad\text{and}\quad \Sigma_2\subset \partial Y_{k-1}\times S^{n-2k-1}.\] Furthermore, using \autoref{thm: Whitney embedding theorems}, we show that $\Sigma$ is isotopic into \[\partial Y_{k-1}\times \{0\}\subset \partial Y_{k-1}\times B^{n-2k}.\]

\begin{theorem}[Whitney Embedding Theorems \cite{whitney1936differentiable,whitney1944self,ranicki2002algebraic}]\label{thm: Whitney embedding theorems}\hfill
    \begin{enumerate}
        \item Let $f: M\rightarrow N$ be a smooth map. If $\text{dim}(N)\geq 2\cdot \text{dim}(M)+1$, then $f$ is homotopic to an embedding $g:M\hookrightarrow N$.
        \item Let $f,g:M\hookrightarrow N$ be homotopic embeddings. If $\text{dim}(N)\geq 2\cdot \text{dim}(M)+2$, then $f$ and $g$ are isotopic.
    \end{enumerate}    
\end{theorem}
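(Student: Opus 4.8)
The plan is to establish both statements by general position (transversality), treating $M$ as compact since the applications concern embedded spheres. For part (1), I would first perturb $f$ to an immersion and then remove its self-intersections. To make $f$ an immersion, I apply the Thom jet-transversality theorem: inside the space of $1$-jets, the locus where $df$ has rank $<m:=\dim M$ is a subvariety of codimension $(m-(m-1))(n-(m-1))=n-m+1$, so a generic arbitrarily small perturbation of $f$ has $1$-jet extension missing this locus precisely when $m<n-m+1$, i.e. when $n\ge 2m$; the hypothesis $n\ge 2m+1$ gives this with room to spare. Having arranged an immersion, I would put it in general position using multijet transversality: making $f\times f$ transverse to the diagonal $\Delta_N\subset N\times N$ off the diagonal of $M\times M$, the double-point set has dimension $2m-n$, which is negative when $n\ge 2m+1$, so the immersion is injective. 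An injective immersion of a compact manifold is an embedding, and since each perturbation is $C^\infty$-small the result is homotopic to $f$.

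For part (2), I would use the standard \emph{track} (concordance) argument. Given a homotopy $F:M\times[0,1]\to N$ with $F_0=f$ and $F_1=g$, I form the level-preserving map $\tilde F:M\times[0,1]\to N\times[0,1]$, $\tilde F(x,t)=(F(x,t),t)$. A level-preserving embedding $\tilde F$ is exactly a smooth isotopy from $f$ to $g$, so it suffices to perturb $\tilde F$, rel $M\times\{0,1\}$ where it is already a level-preserving embedding, into such an embedding. Any self-intersection of the track forces equal second coordinates, so it records a pair $(x,t),(x',t)$ in the same level with $F_t(x)=F_t(x')$; the generic double-point locus has dimension $2(m+1)-(n+1)=2m+1-n$, which is negative exactly when $n\ge 2m+2$. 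Hence a generic level-preserving perturbation has no self-intersections and yields the desired isotopy.

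The main obstacle is the relative, \emph{level-preserving} general position needed in part (2): the perturbation must simultaneously remove self-intersections, preserve the fibering over $[0,1]$ so that each slice stays an embedding, and fix the two ends $f,g$. This is the technical heart of Whitney's uniqueness theorem and requires a fiberwise transversality argument rather than a single application of Sard's theorem; the analogous but easier point in part (1) is only checking that the immersion and injectivity perturbations can be made compatibly small. Since these are the classical Whitney theorems, in the paper I would ultimately defer to the cited references \cite{whitney1936differentiable,whitney1944self,ranicki2002algebraic}, but the general-position sketch above is the argument I would reconstruct.
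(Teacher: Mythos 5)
The paper does not actually prove this theorem: it is quoted as a classical result with citations to Whitney and to Ranicki's book, which is also where you ultimately land, so there is no in-paper argument to diverge from. Your general-position reconstruction is the standard proof from those references and its dimension counts are right: the rank~$<m$ jet stratum has codimension $(m-(m-1))(n-(m-1))=n-m+1$, so generic maps are immersions once $n\geq 2m$; the double-point locus of a generic immersion has dimension $2m-n<0$ when $n\geq 2m+1$; and the track of a homotopy gives a double-point count $2(m+1)-(n+1)=2m+1-n<0$ exactly when $n\geq 2m+2$. You also correctly isolate the real technical content of part (2) --- the perturbation must be fiberwise (level-preserving) and rel $M\times\{0,1\}$, which is a parametric transversality argument rather than a single application of Sard; note that the rel-ends condition is unproblematic because $f$ and $g$ are already embeddings, so the double-point set avoids a neighborhood of the ends and the perturbation can be damped there, and one should also check that each level stays an immersion, which costs only $n\geq 2m+1$ and so holds under the hypothesis. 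Your restriction to compact $M$ is harmless for this paper, since the theorem is only applied to closed manifolds (attaching spheres and links); just be aware that for non-compact $M$ an injective immersion need not be an embedding, so the statement as used here implicitly carries that compactness.
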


\begin{lemma}\label{lem: homotopic to the boundary of a handlebody}
    Fix $k\geq1$ and $n\geq 2k+1$. Let $Y_{k-1}$ be a $2k$-dimensional $(k-1)$-handlebody and $M$ be a $(k-1)$-manifold. If \[f:M\hookrightarrow Y_{k-1}\times S^{n-2k-1}\] is a proper embedding, then $f$ is homotopic to a map \[g:M\rightarrow \partial Y_{k-1}\times S^{n-2k-1}.\]
\end{lemma}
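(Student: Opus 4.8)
The plan is to exploit the fact that a $2k$-dimensional $(k-1)$-handlebody collapses onto a low-dimensional spine, so that the complement of that spine can be flowed out to the boundary. Fix a self-indexing Morse function on $Y_{k-1}$ adapted to its handle decomposition, with all critical points in the interior. Since every handle has index at most $k-1$, the union $C$ of the cores of the handles is a compact spine with $\dim C\leq k-1$ and $C\subset\mathring{Y}_{k-1}$. The one input I will need from standard Morse theory is that the open complement $Y_{k-1}\setminus C$ deformation retracts onto $\partial Y_{k-1}$: every point off the spine fails to lie on any stable manifold of a critical point, so its forward gradient flow escapes through the top level set $\partial Y_{k-1}$, and a reparametrization of this flow gives the retraction $r\colon Y_{k-1}\setminus C\to\partial Y_{k-1}$.

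With this in hand, write $f=(f_1,f_2)$, where $f_1=\pi\circ f\colon M\to Y_{k-1}$ and $f_2\colon M\to S^{n-2k-1}$ are the coordinate maps. The first step is to push $f$ off the set $C\times S^{n-2k-1}$. Because $C\subset\mathring{Y}_{k-1}$, this set lies in the interior of $Y_{k-1}\times S^{n-2k-1}$, while properness of $f$ gives $f(\partial M)\subset\partial Y_{k-1}\times S^{n-2k-1}$; hence $f(\partial M)$ is already disjoint from $C\times S^{n-2k-1}$, and I only need to perturb $f_1$ on the interior, rel $\partial M$. Since $\dim M+\dim C\leq(k-1)+(k-1)=2k-2<2k=\dim Y_{k-1}$, general position lets me homotope $f_1$, rel $\partial M$, to a map $f_1'$ with $f_1'(M)\cap C=\varnothing$. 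Leaving the second coordinate unchanged, $f'=(f_1',f_2)$ is homotopic to $f$ and satisfies $f'(M)\subset(Y_{k-1}\setminus C)\times S^{n-2k-1}$.

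The deformation retraction then finishes the argument. Crossing $r$ with the identity on $S^{n-2k-1}$ yields a deformation retraction of $(Y_{k-1}\setminus C)\times S^{n-2k-1}$ onto $\partial Y_{k-1}\times S^{n-2k-1}$, and tracking $f'$ through it produces a homotopy from $f'$, and hence from $f$, to a map $g\colon M\to\partial Y_{k-1}\times S^{n-2k-1}$. Note that the embedding hypothesis on $f$ plays no role in this conclusion; only properness is used, and only to place $f(\partial M)$ in the boundary.

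I expect the retraction of $Y_{k-1}\setminus C$ onto $\partial Y_{k-1}$ to be the step requiring the most care: one must choose the Morse function and a gradient-like vector field so that the spine lies in the interior and every flow line off the spine reaches $\partial Y_{k-1}$ in finite time, and then reparametrize to obtain an honest deformation retraction that is the identity near the boundary. By contrast, the general position step is routine, thanks to the comfortable codimension gap of $2$ between $\dim M+\dim C$ and $\dim Y_{k-1}$.
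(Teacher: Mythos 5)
Your proof is correct and follows essentially the same route as the paper's: avoid the $(k-1)$-dimensional spine by general position (your count $\dim M+\dim C\leq 2k-2<2k$ in the $Y_{k-1}$ factor is equivalent to the paper's codimension-$2$ count against $\mathcal{S}=S(Y_{k-1})\times S^{n-2k-1}$ in the product), then push the complement of the spine to the boundary. The only cosmetic differences are that you build the retraction of $Y_{k-1}\setminus C$ onto $\partial Y_{k-1}$ via a gradient flow, where the paper simply identifies $Y_{k-1}\setminus\nu(S(Y_{k-1}))$ with the collar $\partial Y_{k-1}\times[0,1]$ and projects, and that you arrange the homotopy rel $\partial M$, a mild strengthening the paper does not need.
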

\begin{proof}
    Let $S(Y_{k-1})\subset Y_{k-1}$ be the canonical $(k-1)$-dimensional subcomplex (spine) of $Y_{k-1}$ obtained by collapsing the second factor of each handle to a point, i.e., $Y_{k-1}$ deformation retracts to $S(Y_{k-1})$. A spine $\mathcal{S}$ of $Y_{k-1}\times S^{n-2k-1}$ is given by \[\mathcal{S}=S(Y_{k-1}\times S^{n-2k-1})=S(Y_{k-1})\times S^{n-2k-1}.\] Since \[dim(\mathcal{S})=(k-1)+(n-2k-1)=n-k-2\] and \[\text{dim}(Y_{k-1}\times S^{n-k-1})-(\text{dim}(\mathcal{S})+\text{dim}(M))=(n-1)-((n-k-2)+(k-1))=2,\] we can assume that $f(M)$ and $\mathcal{S}$ do not intersect. Thus, $f$ is an embedding of $M$ in the complement of a regular neighborhood of $\mathcal{S}$. The complement is given by  \begin{align*}
    &\hspace{.5cm}(Y_{k-1}\times S^{n-k-1})\setminus \nu(\mathcal{S})\\ &= (Y_{k-1}\times S^{n-k-1})\setminus (\nu(S({Y_{k-1}}))\times S^{n-k-1})\quad(\text{since}\;\; \mathcal{S}=S(Y_{k-1})\times S^{n-k-1})) \\
     & =(Y_{k-1}\setminus \nu(S({Y_{k-1}})))\times S^{n-k-1}\quad (\text{since}\;\; A\times B\setminus C\times B=(A\setminus C)\times B)\\
     & =\partial Y_{k-1}\times[0,1]\times S^{n-k-1}\quad (\text{since}\;\; Y_{k-1}\setminus\nu(S(Y_{k-1}))=\partial Y_{k-1}\times[0,1]).
\end{align*}

Define the projection \[p: \partial Y_{k-1}\times [0,1]\times S^{n-k-1}\rightarrow \partial Y_{k-1}\times \{0\}\times S^{n-k-1}= \partial Y_{k-1}\times S^{n-k-1}\] by $p(y,t,z)=(y,0,z)=(y,z)$, where $\partial Y_{k-1}\times \{0\}\times S^{n-k-1}$ is identified with $\partial Y_{k-1}\times S^{n-k-1}$. Then the map \[g=p\circ f: M\rightarrow \partial Y_{k-1}\times S^{n-k-1}\] is homotopic to $f$, completing the proof.
\end{proof}

\begin{lemma}\label{lem: nicely embedded attaching spheres }
    Fix $k\geq1$ and $n\geq 2k+1$. Let $Y_{k-1}$ be a $2k$-dimensional $(k-1)$-handlebody and $M$ be a $(k-1)$-manifold. If \[f: M\hookrightarrow \partial (Y_{k-1}\times B^{n-2k})= (\partial Y_{k-1}\times B^{n-2k}) \cup (Y_{k-1}\times S^{n-2k-1})\] is an embedding, then $f$ is isotopic to an embedding \[j:M\hookrightarrow \partial Y_{k-1}\times \{0\}\subset\partial Y_{k-1}\times B^{n-2k}.\]
    \begin{proof}
    We may assume that $f$ is transverse to \[\partial Y_{k-1}\times S^{n-k-1}=(\partial Y_{k-1}\times B^{n-2k}) \cap (Y_{k-1}\times S^{n-2k-1}).\] Let \[Z=f^{-1}(f(M)\cap(Y_{k-1}\times S^{n-2k-1})).\] Then the restriction map \[f|_{Z}:Z\hookrightarrow Y_{k-1}\times S^{n-2k-1}\] is a proper embedding, so it is homotopic to a map \[g:Z\rightarrow \partial Y_{k-1}\times S^{n-2k-1}\] by \autoref{lem: homotopic to the boundary of a handlebody}. Therefore, we can assume that there exists a map \[h:M\rightarrow \partial Y_{k-1}\times B^{n-2k}\] such that $f$ is homotopic to $h$. Let \[p:\partial Y_{k-1}\times B^{n-2k}\rightarrow \partial Y_{k-1}\times\{0\}\] be the projection sending $(y,t)$ to $(y,0)$. Then $h$ is homotopic to \[p\circ h: M\rightarrow \partial Y_{k-1}\times\{0\}.\] Here, $p\circ h$ is homotopic to an embedding \[j:M\hookrightarrow \partial Y_{k-1}\times\{0\}\] by (1) in \autoref{thm: Whitney embedding theorems}, since \[ \text{dim}(\partial Y_{k-1}\times\{0\})=2k-1=2(k-1)+1=2\cdot \text{dim}(M)+1.\] We can consider \[j:M\rightarrow \partial Y_{k-1}\times\{0\}\subset \partial (Y_{k-1}\times B^{n-2k})\] as an embedding of $M$ in $\partial (Y_{k-1}\times B^{n-2k})$. Note that $f$ and $j$ are homotopic embeddings since \[f\sim h\sim p\circ h \sim j,\] where $a \sim b$ means that $a$ and $b$ are homotopic. Then $f$ and $j$ are isotopic by (2) in \autoref{thm: Whitney embedding theorems} since \[n\geq 2k+1 \;\text{and}\; \text{dim}(\partial (Y_{k-1}\times B^{n-2k}))=n-1\geq 2k=2(k-1)+2=2\cdot \text{dim}(M)+2.\]
    \end{proof}
\end{lemma}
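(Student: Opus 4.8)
The plan is to push $M$ off the ``capping'' region $Y_{k-1}\times S^{n-2k-1}$ and into the product region $\partial Y_{k-1}\times B^{n-2k}$, then collapse that region onto the central slice $\partial Y_{k-1}\times\{0\}$, and finally upgrade the resulting homotopy to an isotopy via the two Whitney theorems. The dimension hypotheses are arranged so that each of these three moves is valid by a single dimension count, and the whole argument reduces to feeding the output of one step into the next.

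First I would make $f$ transverse to the separating hypersurface $\partial Y_{k-1}\times S^{n-2k-1}=(\partial Y_{k-1}\times B^{n-2k})\cap(Y_{k-1}\times S^{n-2k-1})$. The portion of $M$ mapping into the cap, namely $Z=f^{-1}\!\left(f(M)\cap(Y_{k-1}\times S^{n-2k-1})\right)$, is then a codimension-zero submanifold of $M$ with $\dim Z=k-1$, and $f|_Z$ is a \emph{proper} embedding into $Y_{k-1}\times S^{n-2k-1}$. Applying \autoref{lem: homotopic to the boundary of a handlebody} to $f|_Z$ homotopes it into $\partial Y_{k-1}\times S^{n-2k-1}$, which sits inside $\partial Y_{k-1}\times B^{n-2k}$. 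Since the complementary piece $f|_{M\setminus Z}$ already lands in the product region, $f$ becomes homotopic to a map $h\colon M\to\partial Y_{k-1}\times B^{n-2k}$ whose image lies entirely in the product region.

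Next I would compose with the deformation retraction $p\colon\partial Y_{k-1}\times B^{n-2k}\to\partial Y_{k-1}\times\{0\}$, so that $f\sim h\sim p\circ h$ as maps into the $(2k-1)$-dimensional slice $\partial Y_{k-1}\times\{0\}$. Because $\dim(\partial Y_{k-1}\times\{0\})=2k-1=2(k-1)+1=2\dim M+1$, part~(1) of \autoref{thm: Whitney embedding theorems} homotopes $p\circ h$ to an embedding $j\colon M\hookrightarrow\partial Y_{k-1}\times\{0\}$. Regarding $j$ as an embedding into the full boundary $\partial(Y_{k-1}\times B^{n-2k})$, I now have two homotopic embeddings $f$ and $j$ of $M$ into this ambient $(n-1)$-manifold.

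The final and most delicate step is to convert this homotopy into an isotopy, and this is where I expect the real content to lie: homotopy of embeddings is far weaker than isotopy, so everything hinges on the general-position bound. Here part~(2) of \autoref{thm: Whitney embedding theorems} applies, because $n\geq2k+1$ forces $\dim\partial(Y_{k-1}\times B^{n-2k})=n-1\geq2k=2(k-1)+2=2\dim M+2$; hence $f$ and $j$ are isotopic. The main obstacle I anticipate is the bookkeeping in the first step, namely matching the homotopy of $f|_Z$ with the (constant) homotopy of $f|_{M\setminus Z}$ along $\partial Z$ so that they assemble into a genuine homotopy of all of $M$ into the product region; the rest is a matter of checking that the single hypothesis $n\geq2k+1$ simultaneously supplies the embedding bound $2\dim M+1$ for Whitney~(1) and the sharp isotopy bound $2\dim M+2$ for Whitney~(2), the latter being exactly the inequality that cannot be relaxed.
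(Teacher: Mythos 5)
Your proposal is correct and follows essentially the same route as the paper's proof: transversality to $\partial Y_{k-1}\times S^{n-2k-1}$, pushing $f|_Z$ to the boundary via \autoref{lem: homotopic to the boundary of a handlebody}, projecting onto $\partial Y_{k-1}\times\{0\}$, and then applying parts (1) and (2) of \autoref{thm: Whitney embedding theorems} with exactly the same dimension counts. Your closing remark about assembling the homotopy of $f|_Z$ with the constant homotopy on $M\setminus Z$ along $\partial Z$ is a point the paper passes over silently (``we can assume that there exists a map $h$''), so flagging it is a reasonable refinement rather than a deviation.
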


\begin{remark}
In \autoref{lem: nicely embedded attaching spheres }, any two homotopic embeddings \[j, j': M\hookrightarrow \partial Y_{k-1}\times \{0\}\] in $\partial Y_{k-1}\times \{0\}$ are isotopic in $\partial(Y_{k-1}\times B^{n-2k})$ by $(2)$ in \autoref{thm: Whitney embedding theorems}. For example, consider the embeddings \[j,j': S^1\hookrightarrow \partial B^4\times \{0\}=S^3\times\{0\}\] such that \[j(S^1)=U\times\{0\}\;\; \text{and}\;\; j'(S^1)=T\times \{0\},\] where $U$ is the unknot and $T$ is the trefoil knot in $S^3$. Then $j$ and $j'$ are homotopic but not isotopic in $S^3\times \{0\}$. However, they become isotopic in $\partial (B^4\times B^1)=S^4$.
\end{remark}

We recall the following proposition to show that the framing of $\Phi(S^{k-1}\times \{0\}\times\{0\})$ is induced by a framing of $\phi(S^{k-1}\times \{0\})$, where $\Phi$ and $\phi$ are mentioned in the first paragraph of this section.

\begin{proposition}\label{pro: induced maps between homotopy groups of orthogonal groups}
    Let $i:O(l)\hookrightarrow O(l+1)$ be the natural inclusion of orthogonal groups. Then the induced homomorphism \[i_*:\pi_{m}(O(l))\rightarrow \pi_{m}(O(l+1))\] is an epimorphism if $l=m+1$ and an isomorphism if $l>m+1$.
\end{proposition}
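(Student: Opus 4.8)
The plan is to derive everything from the standard fiber bundle
\[O(l) \hookrightarrow O(l+1) \xrightarrow{q} S^l,\]
in which the projection $q$ evaluates an orthogonal matrix on a fixed unit vector (say the last standard basis vector), and the natural inclusion $i$ realizes $O(l)$ as the stabilizer of that vector, i.e.\ as the fiber of $q$. Since this is a genuine fiber bundle, it induces a long exact sequence of homotopy groups, of which the relevant portion is
\[\pi_{m+1}(S^l) \xrightarrow{\partial} \pi_m(O(l)) \xrightarrow{i_*} \pi_m(O(l+1)) \xrightarrow{q_*} \pi_m(S^l).\]
The only external fact I would invoke is the vanishing $\pi_j(S^l)=0$ for all $j<l$.

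For the first assertion, suppose $l=m+1$. Then $\pi_m(S^l)=\pi_m(S^{m+1})=0$, since $m<m+1=l$. By exactness at $\pi_m(O(l+1))$, the image of $i_*$ equals the kernel of $q_*$; as the target $\pi_m(S^l)$ is trivial, this kernel is all of $\pi_m(O(l+1))$. Hence $i_*$ is an epimorphism.

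For the second assertion, suppose $l>m+1$. Exactly as above, $m<l$ gives $\pi_m(S^l)=0$, so $i_*$ is surjective by the same exactness argument. In addition, the strict inequality yields $m+1<l$, hence $\pi_{m+1}(S^l)=0$. By exactness at $\pi_m(O(l))$, the kernel of $i_*$ is the image of $\partial\colon\pi_{m+1}(S^l)\to\pi_m(O(l))$, which is trivial, so $i_*$ is also injective. Combining the two, $i_*$ is an isomorphism.

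The argument is essentially formal once the fibration is in hand, so I do not expect a genuine obstacle; the work lies entirely in correctly identifying the bundle and quoting $\pi_j(S^l)=0$ for $j<l$. The one point demanding care is the index bookkeeping: one must observe that $l=m+1$ forces only $\pi_m(S^l)=0$ (giving surjectivity), whereas the strict inequality $l>m+1$ is precisely what additionally forces $\pi_{m+1}(S^l)=0$ (giving injectivity). This is exactly why the epimorphism and isomorphism ranges in the statement differ by one, and verifying that the two vanishing ranges line up with the two claims is the only step that is not automatic.
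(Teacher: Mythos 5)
Your proof is correct and follows essentially the same route as the paper: the fibration $O(l)\to O(l+1)\to S^l$ and exactness in its long exact sequence, with $\pi_m(S^l)=0$ giving surjectivity and (for $l>m+1$) $\pi_{m+1}(S^l)=0$ giving injectivity. The only cosmetic difference is that the paper notes $\pi_{m+1}(S^{m+1})\cong\mathbb{Z}$ in the epimorphism case, whereas you correctly observe this fact is not needed.
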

\begin{proof}
    Consider the canonical Serre fibration \[O(l)\rightarrow O(l+1)\rightarrow O(l+1)/O(l)\cong S^l.\] This induces the long exact sequence: \[\cdots\rightarrow \pi_{m+1}(S^l)\longrightarrow \pi_m(O(l))\xrightarrow{i_{*}} \pi_m(O(l+1))\rightarrow \pi_m(S^l)\rightarrow \cdots.\]
    If $l=m+1$, then $\pi_{m+1}(S^l)\cong \mathbb{Z}$ and $\pi_m(S^l)=1$, so $i_*$ is an epimorphism. If $l>m+1$, then $\pi_{m+1}(S^l)=1$ and $\pi_m(S^l)=1$, so $i_*$ is an isomorphism.
\end{proof}

The following proposition follows immediately.

\begin{proposition}[Reformulation of \autoref{pro: induced maps between homotopy groups of orthogonal groups}]\label{lem:induced map}
    Fix $k\geq1$ and $n\geq2k+1$. Let $i:O(k)\hookrightarrow O(n-k)$ be the natural inclusion. Then the induced homomorphism \[i_*:\pi_{k-1}(O(k))\rightarrow \pi_{k-1}(O(n-k))\] is an epimorphism.
\begin{proof}
 Fix $k\geq1$. Let $P(n)$ denote the statement above. We use induction to prove that $P(n)$ holds for all $n\geq 2k+1$.\\
    \textit{Base Case}: $P(2k+1)$ is true by \autoref{pro: induced maps between homotopy groups of orthogonal groups}.\\
    \textit{Inductive Step}: Assume $P(m)$ is true.
    Let \[i=j\circ h:O(k)\xhookrightarrow{h} O(m-k)\xhookrightarrow{j} O(m+1-k)\] be the natural inclusion, where $h$ and $j$ are the natural inclusions. Then \[i_{*}=j_{*}\circ h_{*}\] is an epimorphism because $h_*$ is an epimorphism by the induction assumption and $j_{*}$ is an isomorphism by \autoref{pro: induced maps between homotopy groups of orthogonal groups}. Thus, the statement $P(m+1)$ also holds true.
\end{proof}

\end{proposition}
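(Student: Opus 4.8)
The plan is to reduce the claim to \autoref{pro: induced maps between homotopy groups of orthogonal groups} by factoring the inclusion $i$ into a chain of elementary one-step inclusions and tracking the effect of each on $\pi_{k-1}$. Writing $m=k-1$ for the homotopy degree, the hypothesis $n\geq 2k+1$ is equivalent to $n-k\geq k+1=m+2$, so the natural inclusion factors as
\[
O(k)=O(m+1)\hookrightarrow O(m+2)\hookrightarrow\cdots\hookrightarrow O(n-k),
\]
a composition of $n-2k$ one-step inclusions $O(l)\hookrightarrow O(l+1)$ with $l$ ranging over $m+1,m+2,\dots,n-k-1$. (When $n=2k+1$ this is the single inclusion $O(k)\hookrightarrow O(k+1)$.)

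Next I would apply \autoref{pro: induced maps between homotopy groups of orthogonal groups} to each factor separately. The very first inclusion $O(m+1)\hookrightarrow O(m+2)$ has $l=m+1$, so it induces an \emph{epimorphism} on $\pi_m$. Every subsequent inclusion $O(l)\hookrightarrow O(l+1)$ has $l\geq m+2>m+1$, so it induces an \emph{isomorphism} on $\pi_m$. Since the induced map on homotopy groups is functorial, $i_*$ is the composite of these maps, and an epimorphism followed by a string of isomorphisms is again an epimorphism; this is exactly the assertion. Equivalently, one may organize the same bookkeeping as an induction on $n$ with base case $n=2k+1$ (the single epimorphism step) and inductive step supplied by the isomorphism half of \autoref{pro: induced maps between homotopy groups of orthogonal groups}, which is the route the statement's labeling as a ``reformulation'' suggests.

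I do not expect a genuine obstacle here: all of the real content—the stability and surjectivity coming from the fibration $O(l)\to O(l+1)\to S^l$ and its long exact sequence—is already packaged in \autoref{pro: induced maps between homotopy groups of orthogonal groups}. The only point requiring care is the single boundary case $l=m+1$, where surjectivity (rather than bijectivity) is the best one can assert; this is precisely why the conclusion is merely an epimorphism and not an isomorphism, and it pins down why the hypothesis reads $n\geq 2k+1$ rather than something stronger.
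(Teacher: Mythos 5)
Your proposal is correct and is essentially the paper's argument: the paper runs the same decomposition as an induction on $n$, with base case $n=2k+1$ supplied by the epimorphism half of \autoref{pro: induced maps between homotopy groups of orthogonal groups} and the inductive step by its isomorphism half, which is precisely the unrolled chain $O(k)=O(m+1)\hookrightarrow O(m+2)\hookrightarrow\cdots\hookrightarrow O(n-k)$ you describe. Your bookkeeping (the first one-step inclusion is the lone epimorphism, all later steps are isomorphisms since $l\geq m+2$) matches the paper's, so there is nothing to correct.
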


\begin{theorem}[Bott Periodicity Theorem \cite{bott1957stable}]\label{thm: Bott Periodicity theorem}
If $l\geq m+2$,

\begin{equation*}
\pi_m(O(l))\cong
    \begin{cases}
        \{0\}&\hspace{5mm}\text{if}\;\;m=2,4,5,6 \;(\text{mod 8}) \\
        \mathbb{Z}_2 &\hspace{5mm}\text{if}\;\;m=0,1 \;(\text{mod 8}) \\
        \mathbb{Z} & \hspace{5mm}\text{if}\;\;m=3,7 \;(\text{mod 8}).
    \end{cases}
\end{equation*}
\end{theorem}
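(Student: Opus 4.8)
The plan is to split the statement into two independent pieces: a \emph{stability} statement, which identifies $\pi_m(O(l))$ with the stable group $\pi_m(O)$ in the range $l\geq m+2$, and the \emph{periodicity} computation, which determines $\pi_m(O)$ as an $8$-periodic sequence. The first piece is elementary and in fact uses exactly the fibration already exploited in \autoref{pro: induced maps between homotopy groups of orthogonal groups}; the second piece is the genuine content of Bott's theorem and is where all the difficulty lies.

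For stability I would again invoke the Serre fibration $O(l)\to O(l+1)\to S^l$ and its long exact sequence
\[\cdots\to \pi_{m+1}(S^l)\to \pi_m(O(l))\xrightarrow{i_*}\pi_m(O(l+1))\to \pi_m(S^l)\to\cdots.\]
When $l\geq m+2$ we have $l>m+1$, so both $\pi_{m+1}(S^l)=0$ and $\pi_m(S^l)=0$, forcing $i_*$ to be an isomorphism. Iterating gives $\pi_m(O(l))\cong \pi_m(O(l+1))\cong\cdots\cong \pi_m(O)$, where $O=\varinjlim_l O(l)$ is the infinite orthogonal group. Thus the whole statement reduces to computing the stable groups $\pi_m(O)$, and the hypothesis $l\geq m+2$ is precisely the stable range in which this identification holds.

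The low-dimensional stable groups can be pinned down by hand: $\pi_0(O)\cong\mathbb{Z}_2$ (the determinant distinguishes the two components), $\pi_1(O)\cong\pi_1(SO)\cong\mathbb{Z}_2$ (detected by the spin double cover), $\pi_2(O)=0$ (since $\pi_2$ of any compact Lie group vanishes), and $\pi_3(O)\cong\mathbb{Z}$ (as $\pi_3$ of a compact simple Lie group is infinite cyclic). The remaining first-period values $\pi_4=\pi_5=\pi_6=0$ and $\pi_7\cong\mathbb{Z}$, together with the fact that the pattern repeats, are exactly what Bott periodicity supplies through the isomorphism $\pi_{m+8}(O)\cong\pi_m(O)$.

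The main obstacle is establishing this periodicity isomorphism, which is not formal. Bott's original route is Morse-theoretic: one realizes $\Omega O$, and then a chain of iterated loop spaces, as spaces of geodesics in a sequence of symmetric spaces $O/U,\;U/Sp,\;\dots$, and shows that the inclusion of the minimal-geodesic manifold into the full path space is highly connected by checking that every other critical manifold has large index. Threading the resulting chain of homotopy equivalences $\Omega(\mathbb{Z}\times BO)\simeq O$, $\Omega O\simeq O/U$, and so on around the eight-step \emph{Bott song}, yields $\Omega^8 O\simeq O$ and hence the $8$-periodicity. An alternative I would keep in reserve is the $K$-theoretic proof via clutching functions and the computation of $\widetilde{KO}$ of spheres. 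Either way this step is genuinely deep, and I would ultimately cite \cite{bott1957stable} rather than reproduce it; the contribution here is only the reduction to the stable range and the identification of the eight base values.
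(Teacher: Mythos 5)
Your proposal is correct, but it does more than the paper does: the paper offers no proof of this statement at all, citing \cite{bott1957stable} and immediately specializing it (with $m=k-1$, $l=n-k$, so that $l\geq m+2$ becomes $n\geq 2k+1$) in \autoref{thm:nice framings}. Your stabilization step is exactly the fibration argument the paper itself runs in \autoref{pro: induced maps between homotopy groups of orthogonal groups}: the long exact sequence of $O(l)\to O(l+1)\to S^l$ shows $i_*\colon\pi_m(O(l))\to\pi_m(O(l+1))$ is an isomorphism once $l>m+1$, and iterating identifies $\pi_m(O(l))$ with $\pi_m(O)$ throughout the range $l\geq m+2$; your hand computations of $\pi_0,\pi_1,\pi_2,\pi_3$ of $O$ are all standard and correct (for $\pi_3$ one should note the stable range forces $l\geq 5$, avoiding the non-simple $SO(4)$ with $\pi_3\cong\mathbb{Z}^2$). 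One small imprecision: the bare isomorphism $\pi_{m+8}(O)\cong\pi_m(O)$ does \emph{not} determine $\pi_4,\dots,\pi_7$ from the first four values; what you need is the intermediate loop-space identification from the Bott chain, e.g. $\Omega^4 O\simeq Sp$, which gives $\pi_{4+j}(O)\cong\pi_j(Sp)$ and then the same elementary facts ($Sp$ connected and simply connected, $\pi_2$ of a compact Lie group trivial, $\pi_3$ of a compact simple group infinite cyclic) yield $0,0,0,\mathbb{Z}$. You do invoke the chain of equivalences, so this is a matter of phrasing rather than a gap, and deferring the periodicity equivalence itself to \cite{bott1957stable} is entirely appropriate --- that is precisely what the paper does, only without your (correct) reduction spelled out.
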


In \autoref{thm: Bott Periodicity theorem}, let $m=k-1$ and $l=n-k$. The condition $l\geq m+2$ is equivalent to $n\geq 2k+1$.

\begin{theorem}[Restatement of \autoref{thm: Bott Periodicity theorem}]\label{thm:nice framings}
If $n\geq 2k+1$,

\begin{equation*}
G=\pi_{k-1}(O(n-k))\cong
    \begin{cases}
        \{0\}&\hspace{5mm}\text{if}\;\;k=3,5,6,7 \;(\text{mod 8}) \\
        \mathbb{Z}_2 & \hspace{5mm}\text{if}\;\;k=1,2 \;(\text{mod 8}) \\
        \mathbb{Z} & \hspace{5mm}\text{if}\;\;k=0,4 \;(\text{mod 8}).
    \end{cases}
\end{equation*}
\end{theorem}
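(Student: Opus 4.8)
The plan is to derive this statement directly from the Bott Periodicity Theorem (\autoref{thm: Bott Periodicity theorem}) by a single reindexing, exactly as signaled in the sentence preceding the statement. I would set $m = k-1$ and $l = n-k$. The first thing to verify is that the hypothesis $l \geq m+2$ of \autoref{thm: Bott Periodicity theorem} is equivalent to the hypothesis $n \geq 2k+1$ of the present statement: indeed $l \geq m+2$ reads $n-k \geq (k-1)+2 = k+1$, which rearranges precisely to $n \geq 2k+1$. Hence the two statements have matching domains of validity, and Bott periodicity applies verbatim to $\pi_{k-1}(O(n-k)) = \pi_m(O(l))$.

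The only remaining work is bookkeeping of residues modulo $8$. Since $m = k-1$, we have $m \equiv k-1 \pmod{8}$, so I would translate each congruence class for $m$ into the corresponding class for $k$ by adding $1$. Concretely: the case $m \equiv 2,4,5,6 \pmod{8}$ (giving $\{0\}$) becomes $k \equiv 3,5,6,7 \pmod{8}$; the case $m \equiv 0,1 \pmod{8}$ (giving $\mathbb{Z}_2$) becomes $k \equiv 1,2 \pmod{8}$; and the case $m \equiv 3,7 \pmod{8}$ (giving $\mathbb{Z}$) becomes $k \equiv 4,0 \pmod{8}$, using $7+1 = 8 \equiv 0$. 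These are exactly the three cases listed in the statement, so substituting into \autoref{thm: Bott Periodicity theorem} yields the claimed isomorphisms.

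There is no genuine obstacle here, as the statement is a verbatim reindexing of Bott periodicity; the entire content is the observation that $n \geq 2k+1 \Leftrightarrow l \geq m+2$ together with the shift of residues by one. The single point to carry out with care is the modular arithmetic in the $\mathbb{Z}$ case, where $m \equiv 7$ corresponds to $k \equiv 0 \pmod{8}$ rather than to $k \equiv 8$.
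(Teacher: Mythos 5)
Your proposal is correct and is exactly the paper's argument: the paper derives \autoref{thm:nice framings} from \autoref{thm: Bott Periodicity theorem} by the same substitution $m=k-1$, $l=n-k$, noting that $l\geq m+2$ is equivalent to $n\geq 2k+1$. Your residue bookkeeping, including the point that $m\equiv 7$ corresponds to $k\equiv 0 \pmod 8$, matches the stated cases verbatim.
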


\autoref{thm:nice framings} describes all possible framings of the attaching sphere of an $n$-dimensional $k$-handle when $n\geq 2k+1$. For example, a $5$-dimensional $2$-handle has $\mathbb{Z}_2$-framings, and a $7$-dimensional $3$-handles has a unique framing.

\begin{lemma}\label{lem:product structure on maps}
    Fix $k\geq1$ and $n\geq2k+1$. Let $Y$ be a $2k$-manifold with boundary. Suppose we have an embedding \[\phi:S^{k-1}\times B^{k}\hookrightarrow \partial Y.\] Define an embedding \[\Phi=\phi\times id_{B^{n-2k}}:S^{k-1}\times B^{k}\times B^{n-2k}\hookrightarrow \partial Y\times B^{n-2k}\subset \partial (Y\times B^{n-2k})\] by \[\Phi(x,y,t)=(\phi(x,y),t).\] Then \[(Y\times B^{n-2k})\cup_\Phi (B^k\times B^{k}\times B^{n-2k})\cong (Y\cup_\phi (B^k\times B^{k}))\times B^{n-2k}.\]
    
    \begin{proof} We begin with the $n$-manifold $(Y\times B^{n-2k})\cup_\Phi (B^k\times B^{k}\times B^{n-2k})$, which is obtained from the $n$-manifold $Y\times B^{n-2k}$ by attaching a $k$-handle along $\Phi$. Then
    \begin{align*}
    &\hspace{0.5cm} (Y\times B^{n-2k})\cup_\Phi (B^k\times B^{k}\times B^{n-2k})\\
    &=\frac{(Y\times B^{n-2k})\coprod (B^k\times B^{k}\times B^{n-2k})}{(x,y,t)\sim \Phi(x,y,t)}\;\;( \text{by definition of the handle attachment})\\
    &\cong \frac{(Y\times B^{n-2k})\coprod (B^k\times B^{k}\times B^{n-2k})}{(x,y,t)\sim (\phi(x,y),t)}\;\;(\text{since}\;\; \Phi(x,y,t)=(\phi(x,y),t))\\
    & \cong \frac{Y \coprod (B^k\times B^{k})}{(x,y)\sim \phi(x,y)}\times B^{n-2k}\;\;(\text{by definition of the quotient space})\\
     & \cong (Y\cup_\phi (B^k\times B^{k}))\times B^{n-2k}\;\;(\text{by definition of the handle attachment}).
\end{align*} Thus, \[(Y\times B^{n-2k})\cup_\Phi (B^k\times B^{k}\times B^{n-2k})\cong (Y\cup_\phi (B^k\times B^{k}))\times B^{n-2k}.\]
    \end{proof}
\end{lemma}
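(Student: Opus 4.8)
The plan is to treat this as a purely formal manipulation of quotient spaces, exploiting the hypothesis that the attaching map $\Phi$ is a product map $\phi\times\mathrm{id}_{B^{n-2k}}$. By the definition of handle attachment, the left-hand side is the quotient of the disjoint union $(Y\times B^{n-2k})\coprod(B^k\times B^k\times B^{n-2k})$ by the equivalence relation identifying each point $(x,y,t)$ of the attaching region $S^{k-1}\times B^k\times B^{n-2k}$ of the handle with its image $\Phi(x,y,t)\in\partial Y\times B^{n-2k}$. The entire argument is then a careful unwinding of this quotient.

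First I would substitute $\Phi(x,y,t)=(\phi(x,y),t)$ into the gluing relation. The crucial observation is that this identification never touches the $B^{n-2k}$-coordinate $t$: it acts only on the first $2k$ coordinates via $\phi$ and is the identity on the last factor. Hence the relation is the ``product'' of the gluing relation $\sim_\phi$ on $Y\coprod(B^k\times B^k)$, which identifies $(x,y)$ with $\phi(x,y)$, together with the trivial relation on $B^{n-2k}$.

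The main step is then to pull the fixed factor $B^{n-2k}$ out of the quotient, that is, to establish the homeomorphism
\[
\frac{\bigl(Y\coprod(B^k\times B^k)\bigr)\times B^{n-2k}}{(x,y,t)\sim(\phi(x,y),t)}\;\cong\;\left(\frac{Y\coprod(B^k\times B^k)}{(x,y)\sim\phi(x,y)}\right)\times B^{n-2k}.
\]
This is exactly the assertion that forming the quotient commutes with taking the product with the fixed space $B^{n-2k}$. I would justify it via the standard point-set fact that if $q\colon A\to A/{\sim}$ is a quotient map and $C$ is locally compact Hausdorff, then $q\times\mathrm{id}_C$ is again a quotient map; since $B^{n-2k}$ is compact Hausdorff, this applies, and the two quotient topologies agree. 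Recognizing the right-hand factor as $Y\cup_\phi(B^k\times B^k)=Y_k$ then completes the topological identification.

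The one place demanding genuine care, and what I expect to be the only real obstacle, is upgrading this homeomorphism to a diffeomorphism in the smooth category, since handle attachment introduces corners that must be smoothed. However, because both $\Phi$ and the handle $B^k\times B^k\times B^{n-2k}$ are literally products with $\mathrm{id}_{B^{n-2k}}$, the corner-smoothing used to build $Y_k=Y\cup_\phi(B^k\times B^k)$ can be fixed first and then crossed with $B^{n-2k}$, so the smoothing respects the product structure on the nose. Thus the identity-on-$B^{n-2k}$ map constructed above is smooth with smooth inverse, yielding the desired diffeomorphism rather than a mere homeomorphism.
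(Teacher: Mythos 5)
Your proposal is correct and follows essentially the same route as the paper's proof: both unwind the handle attachment as a quotient, substitute $\Phi(x,y,t)=(\phi(x,y),t)$, and factor the untouched $B^{n-2k}$ coordinate out of the quotient to identify the result with $\bigl(Y\cup_\phi(B^k\times B^k)\bigr)\times B^{n-2k}$. You in fact supply two justifications the paper leaves implicit --- the point-set fact that quotienting commutes with taking the product with a compact Hausdorff factor, and the compatibility of corner smoothing with the product structure --- which only strengthen the same argument.
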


\begin{lemma}\label{lem: attaching map is isotopic to a product map}
    Fix $k\geq1$ and $n\geq2k+1$. Let $Y_{k-1}$ be a $2k$-dimensional $(k-1)$-handlebody, and $X_{k-1}=Y_{k-1}\times B^{n-2k}$ be an $n$-dimensional $k$-handlebody. Let \[\Phi:S^{k-1}\times B^k\times B^{n-2k}\hookrightarrow \partial X_{k-1}\] be an embedding. Then there exists an embedding \[\phi:S^{k-1}\times B^k\hookrightarrow \partial Y_{k-1}\] such that $\Phi$ is isotopic to $\phi\times id$. In particular, \[X_k=X_{k-1}\cup_{\Phi}(B^k\times B^k\times B^{n-k})\cong (Y_{k-1}\cup_{\phi}(B^k\times B^k))\times B^{n-2k}\cong Y_k\times B^{n-2k}.\]
\begin{proof} Let \[\Phi:S^{k-1}\times B^{k}\times B^{n-2k}\hookrightarrow \partial (Y_{k-1}\times B^{n-2k})\] be an embedding, and let \[X_k=X_{k-1}\cup_{\Phi}(B^k\times B^k\times B^{n-2k})\] be the $n$-dimensional $k$-handlebody obtained from $X_{k-1}$ by attaching a $k$-handle along $\Phi$.

By \autoref{lem: nicely embedded attaching spheres }, the attaching sphere $\Phi(S^{k-1}\times \{0\}\times\{0\})$ is isotopic to a sphere $F\times\{0\}\subset \partial Y_{k-1}\times\{0\}\subset \partial Y_k\times B^{n-2k}$ for some embedded $(k-1)$-sphere $F$ in $\partial Y_{k-1}$. Let $\nu(F)$ and $\nu(F\times\{0\})$ be closed regular neighborhoods in $\partial Y_{k-1}$ and $\partial Y_{k-1} \times B^{n-2k}\subset\partial(Y_{k-1}\times B^{n-2k})$, respectively. Then we can assume that $\nu(F\times\{0\})=\nu(F)\times B^{n-2k}\subset\partial(Y_{k-1}\times B^{n-2k})$.

By \autoref{lem:induced map}, a framing of $F\times \{0\}\subset\partial Y_{k-1}\times B^{n-2k}$ is induced by a framing of $F\subset\partial Y_{k-1}$. Therefore, there exists an embedding $\phi: S^{k-1}\times B^k\hookrightarrow  \partial Y_{k-1}$ such that 
    \begin{enumerate}
        \item $\phi(S^{k-1}\times \{0\})=F$,
        \item $\phi(S^{k-1}\times B^k)=\nu(F)$,
        \item $\phi\times id$ is isotopic to $\Phi$,
    \end{enumerate}
    where $\phi\times id: S^{k-1}\times B^k\times B^{n-2k}\hookrightarrow \partial Y_{k-1}\times B^{n-2k}\subset \partial (Y_{k-1}\times B^{n-2k})$ is defined by $(\phi\times id)(x,y,t)=(\phi(x,y),t)$. 
    
Furthermore, by \autoref{lem:product structure on maps}, we have $\begin{aligned}[t]
    X_k &= (Y_{k-1}\times B^{n-2k})\cup_\Phi (B^k\times B^k\times B^{n-2k})\\
     &\cong (Y_{k-1}\cup_\phi (B^k\times B^{k}))\times B^{n-2k}\\
     &\cong Y_k\times B^{n-2k},  \text{where $Y_k=Y_{k-1}\cup_\phi(B^k\times B^k)$.}
\end{aligned}$
\end{proof}
\end{lemma}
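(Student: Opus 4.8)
The plan is to exploit the fact that, up to isotopy, the attaching map $\Phi$ is completely recorded by two pieces of data: its attaching sphere $\Sigma=\Phi(S^{k-1}\times\{0\}\times\{0\})$ and a framing of $\Sigma$ in $\partial X_{k-1}$. I would first arrange the sphere to sit inside the product slice $\partial Y_{k-1}\times\{0\}$, then separately check that the framing may be taken to be a product framing; once both hold, the promised embedding $\phi$ exists and \autoref{lem:product structure on maps} finishes the argument.

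For the sphere, I would apply \autoref{lem: nicely embedded attaching spheres } with $M=S^{k-1}$. This isotopes $\Sigma$ inside $\partial(Y_{k-1}\times B^{n-2k})$ onto a sphere of the form $F\times\{0\}$, where $F\subset\partial Y_{k-1}$ is an embedded $(k-1)$-sphere. Using the tubular neighborhood theorem I would further normalize the situation so that a closed regular neighborhood of $F\times\{0\}$ has the product form $\nu(F)\times B^{n-2k}$, with $\nu(F)$ a regular neighborhood of $F$ in $\partial Y_{k-1}$; this identifies the attaching region of the $k$-handle with $\nu(F)\times B^{n-2k}$.

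The main obstacle is matching the framings, and this is the only place where the dimension hypothesis $n\geq 2k+1$ is essential. The normal bundle of $F$ in the $(2k-1)$-manifold $\partial Y_{k-1}$ has rank $k$, so framings of $F$ are classified by $\pi_{k-1}(O(k))$; the normal bundle of $F\times\{0\}$ in the $(n-1)$-manifold $\partial(Y_{k-1}\times B^{n-2k})$ has rank $n-k$ and splits as the normal bundle of $F$ together with the trivial $\mathbb{R}^{n-2k}$ factor coming from the $B^{n-2k}$ directions. Appending the standard frame on these extra directions is exactly the stabilization map $i_*:\pi_{k-1}(O(k))\to\pi_{k-1}(O(n-k))$, which is an epimorphism by \autoref{lem:induced map}. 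Hence the framing carried by $\Phi$, a priori an arbitrary element of $\pi_{k-1}(O(n-k))$, is isotopic to one induced from a framing of $F$. This yields an embedding $\phi:S^{k-1}\times B^k\hookrightarrow\partial Y_{k-1}$ with $\phi(S^{k-1}\times\{0\})=F$ and $\phi(S^{k-1}\times B^k)=\nu(F)$ whose product $\phi\times id$ is isotopic to $\Phi$.

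Finally, with $\Phi\simeq\phi\times id$ in hand, I would invoke \autoref{lem:product structure on maps} directly: attaching the handle along a product map pulls the $B^{n-2k}$ factor outside the gluing, giving
\[
X_k=(Y_{k-1}\times B^{n-2k})\cup_{\Phi}(B^k\times B^k\times B^{n-2k})\cong\bigl(Y_{k-1}\cup_\phi(B^k\times B^k)\bigr)\times B^{n-2k}=Y_k\times B^{n-2k}.
\]
I expect the framing step to be the delicate point: the isotopy of the sphere is essentially free once \autoref{lem: nicely embedded attaching spheres } is available, whereas the compatibility of framings rests entirely on the surjectivity of $i_*$ supplied by \autoref{lem:induced map}.
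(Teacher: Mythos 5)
Your proposal is correct and follows essentially the same route as the paper: isotope the attaching sphere into the slice $\partial Y_{k-1}\times\{0\}$ via \autoref{lem: nicely embedded attaching spheres }, match framings using the surjectivity of the stabilization map $i_*:\pi_{k-1}(O(k))\to\pi_{k-1}(O(n-k))$ from \autoref{lem:induced map}, and conclude with \autoref{lem:product structure on maps}. Your added explanation of why stabilization is the relevant map --- the splitting of the rank-$(n-k)$ normal bundle of $F\times\{0\}$ as the rank-$k$ normal bundle of $F$ plus the trivial $\mathbb{R}^{n-2k}$ factor --- is left implicit in the paper and is a welcome clarification, but it does not change the argument.
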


\begin{remark}\label{rmk: induced framings}
    In \autoref{lem: attaching map is isotopic to a product map}, assume that the framing $\Phi$ of $F\times\{0\}$ is induced by an element $\alpha\in\pi_{k-1}(O(n-k))$. Then the framing of $\phi$ of $F$ is induced by an element $\beta\in \pi_{k-1}(O(k))$ such that $i_{*}(\beta)=\alpha$, where $i_*:\pi_{k-1}(O(k))\rightarrow\pi_{k-1}(O(n-k))$ is the epimorphism in \autoref{pro: induced maps between homotopy groups of orthogonal groups}.
\end{remark}

\productstructureonhandlebodytwo

\begin{proof}
     Let $P(k)$ denote the statement above. We will use induction to prove that $P(k)$ holds for all $k\geq0$.\\
    \textit{Base Case}: $P(0)$ is true because \[X_0=B^n\cong B^{2k}\times B^{n-2k}=Y_0\times B^{n-2k},\] where $Y_0=B^{2k}$. \\
    \textit{Inductive Step}: Assume that $P(k)$ is true. Consider an $n$-dimensional $(k+1)$-handlebody \[X_{k+1}=X_{k}\cup\left( \coprod \left(B^{k+1}\times B^{k+1}\times B^{n-2(k+1)}\right)\right),\] which is obtained from an $n$-dimensional $k$-handlebody $X_k$ by attaching $(k+1)$-handles, where $n\geq2k+3=2(k+1)+1\geq 2k+1$. 
    
    By the inductive hypothesis, there exists a $2k$-dimensional $k$-handlebody $Y_k$ such that \[X_k\cong Y_k\times B^{n-2k}.\] We will show that there exists a $2(k+1)$-dimensional $(k+1)$-handlebody $Z_{k+1}$ such that \[X_{k+1}\cong Z_{k+1}\times B^{n-2(k+1)}.\] 
    
    Consider $X_k$ as $Y_k\times B^2\times B^{n-2(k+1)}$, and let $Z_k=Y_k\times B^2$ be a $2(k+1)$-dimensional $k$-handlebody. Note that \[X_{k+1}\cong (Z_{k}\times B^{n-2(k+1)})\cup \left(\coprod \left(B^{k+1}\times B^{k+1}\times B^{n-2(k+1)}\right)\right).\] Since $n\geq 2(k+1)+1$, by \autoref{lem: attaching map is isotopic to a product map}, there exists a $2(k+1)$-dimensional $(k+1)$-handlebody $Z_{k+1}$ such that \[X_{k+1}\cong Z_{k+1}\times B^{n-2(k+1)}.\] Thus, the statement $P(k+1)$ also holds true.
\end{proof}

\section{$(n,k)$-Kirby diagrams}\label{sec: generalized Kirby diagrams}
Fix $k\geq2$ and $n\geq2k$. We introduce $(n,k)$-Kirby diagrams of $n$-dimensional $k$-handlebodies $X$ such that \[X=\natural^a\left(S^{k-1}\times B^{n-k+1}\right)\cup_{\phi}\left(\coprod^b\left(B^k\times B^{n-k}\right)\right),\] where $\phi:\coprod^b(S^{k-1}\times B^{n-k})\hookrightarrow \#^a(S^{k-1}\times S^{n-k-2})$ is an embedding. Here, $\natural^a(S^{k-1}\times B^{n-k+1})$ is obtained from a $0$-handle $B^n$ by attaching $a$ $(k-1)$-handles trivially, and $X$ is obtained from $\natural^a(S^{k-1}\times B^{n-k+1})$ by attaching $b$ $k$-handles along $\phi$.

When $k=2$ and $n=4$, $X$ is a $4$-dimensional $2$-handlebody which can represented by a Kirby diagram $\mathcal{K}=L_1\cup L_2\subset S^3$, where $L_1$ is a dotted trivial $a$-component $1$-link, and $L_2$ is a framed $b$-component $1$-link representing $1$-handles and $2$-handles, respectively. For further details, see \cite{kirby1978calculus,kirby2006topology, gompf20234, akbulut20164}.

An \textit{$m$-link} $L\subset S^{n-1}$ is the image of an embedding $\phi:\coprod S^m\hookrightarrow S^{n-1}$ of a disjoint union of $m$-spheres in $S^{n-1}$, i.e., $L=\phi(\coprod S^m)$. We call $L$ an \textit{$m$-knot} if $L$ is the image of an embedding of a single $m$-sphere. An $m$-link $L$ is called \textit{trivial} if there exists an embedding $\Phi:\coprod B^{m+1}\hookrightarrow S^{n-1}$ of a disjoint union of $(m+1)$-balls in $S^{n-1}$ such that $\Phi(\coprod S^m)=L$. Here, we call $D_L=\phi(\coprod B^{m+1})$ the \textit{trivial $(m+1)$-balls} of the trivial $m$-link $L$.

\begin{definition}\label{def: generalized Kirby diagram}
    Fix $k\geq2$ and $n\geq2k$. An \textit{$(n,k)$-Kirby diagram} $\mathcal{K}=L_1\cup L_2\subset S^{n-1}=\partial B^n$ is the union of a dotted trivial $(n-k-1)$-link $L_1$ and a framed $(k-1)$-link $L_2$ such that $L_1\cap L_2=\emptyset$, i.e., there exists an embedding \[\phi:(\coprod S^{n-k-1})\coprod\left(\coprod \left(S^{k-1}\times B^{n-k}\right)\right)\hookrightarrow S^{n-1}\] such that $L_1=\phi(\coprod S^{n-k-1})$ and $L_2=\phi(\coprod (S^{k-1}\times \{0\}))$.
\end{definition}

\begin{remark}\label{rmk: (n,k)-Kirby diagrams}\hfill
\begin{enumerate}
    \item Akbulut \cite{Akbulut_1977} introduced the dotted trivial $1$-sphere notation to represent trivial $4$-dimensional $1$-handle attachment.  More generally, a dotted trivial $(i-j-2)$-sphere in $S^{i-1}=\partial B^i$ represents a trivial $i$-dimensional $j$-handle attachment; see \cite{akbulut20164, gay2021diffeomorphisms}. In particular, some figures in \cite{gay2021diffeomorphisms} illustrates Gay's dotted trivial $2$-sphere notation representing trivial $5$-dimensional $1$-handle attachment. In \autoref{def: generalized Kirby diagram}, we focus on the case that $i=n$ and $j=k-1$, meaning the dotted trivial $(n-k-1)$-link in $S^{n-1}$.
    \item Let $\mathcal{K}=L_1\cup L_2\subset S^{n-1}$ be an $(n,k)$-Kirby diagram, where $k\geq2$ and $n\geq2k+1$. Since $L_2$ itself is homotopic to the trivial $(k-1)$-link and \[\text{dim}(S^{n-1})=n-1\geq2k= 2(k-1)+2=2\cdot \text{dim}(L_2)+2,\] it follows from \autoref{thm: Whitney embedding theorems} that $L_2$ is isotopic to the trivial $(k-1)$-link in $S^{n-1}$. However this does not imply that $\mathcal{K}=L_1\cup L_2$ is trivial. Furthermore, $L_2$ has $G$-framings, where\begin{equation*}
G=\pi_{k-1}(O(n-k))\cong
    \begin{cases}
        \{0\}&\hspace{5mm}\text{if}\;\;k=3,5,6,7 \;\text{(mod 8)} \\
        \mathbb{Z}_2 & \hspace{5mm}\text{if}\;\;k=1,2 \;\text{(mod 8)} \\
        \mathbb{Z} & \hspace{5mm}\text{if}\;\;k=0,4 \;\text{(mod 8)}
    \end{cases}
\end{equation*} by \autoref{thm:nice framings}. Let $U\subset L_2\subset S^{n-1}$ be the trivial $(k-1)$-knot, which is a component of $L_2$. We define the \textit{$0$-framing} of $U$ as the canonical embedding $\phi:S^{k-1}\times B^{n-k}\hookrightarrow S^{n-1}$ such that
    \begin{enumerate}
        \item $\phi(S^{k-1}\times\{0\})=U$,
        \item $\phi(S^{k-1}\times B^{n-k})=\nu(U)$,
        \item $B^n\cup_{\phi}(B^k\times B^{n-k})\cong S^k\times B^{n-k}$.
    \end{enumerate} Then $m$-framing of $U$ can be canonically defined by an element $m\in G$. 
    
    \item For a $(2k,k)$-Kirby diagram $\mathcal{K}=L_1\cup L_2\subset S^{2k-1}$, $L_2$ can be a non-trivial link and has $\pi_{k-1}(O(k))$-framings. Therefore, $(n,k)$-Kirby diagrams (where $n\geq2k+1$) are generally simpler than $(2k,k)$-Kirby diagrams. For example, let $\mathcal{K}=L_1\cup L_2\subset S^3$ be a $(4,2)$-Kirby diagram. Then $L_2$ can be linked in $S^3$, and each component of $L_2$ has $\mathbb{Z}$-framings. Let $\mathcal{K'}=L_1'\cup L_2'\subset S^4$ be a $(5,2)$-Kirby diagram. Then $L_2'$ is always trivial in $S^4$ and can be isotoped to the equator $S^3\subset S^4$. Also, each component has $\mathbb{Z}_2$-framings. Later, we describe $(n,k)$-Kirby diagrams using $(4,2)$-Kirby diagrams when $n\geq2k+1$. In other words, given an $(n,k)$-Kirby diagram $\mathcal{K}$ in $S^{n-1}$, there exists a well-chosen $S^3\subset S^{n-1}$ such that $\mathcal{K}\cap S^3$ is a $(4,2)$-Kirby diagram in $S^3$.
    
\end{enumerate}
\end{remark}

We now construct the $n$-dimensional $k$-handlebody $M_{\mathcal{K}}$ from an $(n,k)$-Kirby diagram $\mathcal{K}$.

\begin{remark}
    Let $\mathcal{K}=L_1\cup L_2\subset S^{n-1}=\partial B^n$ be an $(n,k)$-Kirby diagram. Let $D_{L_1}\subset S^{n-1}$ be the trivial $(n-k)$-balls with boundary $\partial D_{L_1}=L_1$. Define $D_{L_1}'\subset B^n$ as the properly embedded $(n-k)$-balls obtained by pushing the interior of $D_{L_1}$ into $B^n$. 
    
    The closure of the complement $\overline{B^n\setminus \nu(D_{L_1}')}$, where $\nu(D_{L_1}')$ is a regular neighborhood of $D_{L_1}'$, is diffeomorphic to $\natural^{|L_1|}(S^{k-1}\times B^{n-k+1})$. Here, $\overline{B^n\setminus \nu(D_{L_1}')}$ consists of a $0$-handle and trivial $|L_1|$ $(k-1)$-handles. The link $L_2$ can be considered as a framed $(k-1)$-link in \[S^{n-1}\setminus \nu(L_1)\subset\partial(\overline{B^n\setminus \nu(D_{L_1}')})\cong \#^{|L_1|}(S^{k-1}\times S^{n-k}).\] We note that any $(k-1)$-link in $\#^{|L_1|}(S^{k-1}\times S^{n-k})$ can be isotoped into $S^{n-1}\setminus \nu(L_1)$ because $n\geq2k$.
\end{remark}
    
\begin{definition}
    We define $M_{\mathcal{K}}=\overline{B^n\setminus \nu(D_{L_1}')}\cup_{L_2} \text{$k$-handles}$ as the $n$-dimensional $k$-handlebody obtained from $\overline{B^n\setminus \nu(D_{L_1}')}$ by attaching $k$-handles along $L_2$. We call $M_{\mathcal{K}}$ an $\textit{$n$-dimensional $k$-handlebody}$ induced by $\mathcal{K}$.
\end{definition}

Now, consider an $(n,k)$-Kirby diagram $\mathcal{K}=L_1\cup L_2\subset S^{n-1}$ and ignore the framing of the $(k-1)$-link $L_2$. We still refer to $\mathcal{K}$ as an \textit{$(n,k)$-Kirby diagram}, but the context will make it clear whether $L_2$ is considered as a framed link or not. 

Next, we identify $S^3$ and $S^{n-1}$ with their respective one-point compactifications:

\[S^3=\mathbb{R}^3\cup\{\infty\} \quad\text{and}\quad S^{n-1}=(\mathbb{R}^{n-k-2}\times\mathbb{R}^3\times \mathbb{R}^{k-2})\cup\{\infty\}
     =\mathbb{R}^{n-1}\cup \{\infty\}.\]

\begin{theorem}\label{thm: $(n,k)$-Kirby diagram induced by $(4,2)$-Kirby diagram}
    Fix $k\geq2$ and $n\geq2k$ with $(n,k)\neq(4,2)$. Let $\mathcal{K}=L_1\cup L_2\subset\mathbb{R}^3$ be a $(4,2)$-Kirby diagram. Then there exists an $(n,k)$-Kirby diagram $\tilde{\mathcal{K}}=\tilde{L_1}\cup\tilde{L_2}\subset \mathbb{R}^{n-k-2}\times\mathbb{R}^3\times\mathbb{R}^{k-2}$ such that \[\tilde{\mathcal{K}}\cap(\{0\}\times\mathbb{R}^3\times\{0\})=\{0\}\times\mathcal{K}\times\{0\}\subset\mathbb{R}^{n-k-2}\times\mathbb{R}^3\times\mathbb{R}^{k-2}.\]

\begin{proof}
Let $\mathcal{K}=L_1\cup L_2\subset\mathbb{R}^3$ be a $(4,2)$-Kirby diagram. Let $D_{L_1}\subset \mathbb{R}^3$ be the trivial $2$-disks with $\partial D_{L_1}=L_1$. Define \[\tilde{L_1}=\partial(B^{n-k-2}\times D_{L_1}\times\{0\})\subset \mathbb{R}^{n-k-2}\times\mathbb{R}^3\times\mathbb{R}^{k-2},\] where $B^m\subset\mathbb{R}^m$ be the standard unit $m$-ball. Then $\tilde{L_1}$ is the trivial $(n-k-1)$-link in $\mathbb{R}^{n-k-2}\times\mathbb{R}^3\times\mathbb{R}^{k-2}$. 

Now, consider the $1$-link $\{0\}\times L_2\subset \mathbb{R}^{n-k-2}\times\mathbb{R}^3$. Let $D_{\{0\}\times L_2}\subset \mathbb{R}^{n-k-2}\times\mathbb{R}^3$ be the trivial $2$-disks with $\partial D_{\{0\}\times L_2}=\{0\}\times L_2$. Define \[\tilde{L_2}=\partial (D_{\{0\}\times L_2}\times B^{k-2})\subset \mathbb{R}^{n-k-2}\times\mathbb{R}^3\times\mathbb{R}^{k-2}.\] Then $\tilde{L_2}$ is the trivial $(k-1)$-link in $\mathbb{R}^{n-k-2}\times\mathbb{R}^3\times\mathbb{R}^{k-2}$. Note that \[\tilde{L_1}=(S^{n-k-3}\times D_{L_1}\times\{0\})\cup (B^{n-k-2}\times L_1 \times \{0\})\subset \mathbb{R}^{n-k-2}\times\mathbb{R}^3\times\mathbb{R}^{k-2}\] and \[\tilde{L_2}=(\{0\}\times L_{2}\times B^{k-2})\cup (D_{\{0\}\times L_2}\times S^{k-3})\subset\mathbb{R}^{n-k-2}\times\mathbb{R}^3\times\mathbb{R}^{k-2}.\] Let $\tilde{\mathcal{K}}=\tilde{L_1}\cup\tilde{L_2}\subset\mathbb{R}^{n-k-2}\times\mathbb{R}^3\times\mathbb{R}^{k-2}$. Then $\tilde{\mathcal{K}}$ is an $(n,k)$-Kirby diagram, and we have

\[\begin{aligned}[t]
    \tilde{\mathcal{K}}\cap(\{0\}\times\mathbb{R}^3\times\{0\}) &= (\tilde{L_1}\cup \tilde{L_2})\cap(\{0\}\times\mathbb{R}^3\times\{0\}) \\
     &= (\tilde{L_1}\cap(\{0\}\times\mathbb{R}^3\times\{0\}))\cup(\tilde{L_2}\cap(\{0\}\times\mathbb{R}^3\times\{0\}))\\
     &= (\{0\}\times L_1\times\{0\})\cup(\{0\}\times L_2\times\{0\})\\
     &= \{0\}\times (L_1\cup L_2)\times\{0\}\\
     &=\{0\}\times\mathcal{K}\times\{0\}\subset\mathbb{R}^{n-k-2}\times\mathbb{R}^3\times\mathbb{R}^{k-2}.
\end{aligned}\]
\end{proof}
\end{theorem}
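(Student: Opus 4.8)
The plan is to give an explicit ``double thickening'' (spinning) construction and then verify the defining properties of an $(n,k)$-Kirby diagram together with the slice identity. Concretely, I would set $\tilde{L_1}=\partial(B^{n-k-2}\times D_{L_1}\times\{0\})$, where $D_{L_1}\subset\mathbb{R}^3$ are the trivial $2$-disks with $\partial D_{L_1}=L_1$, and $\tilde{L_2}=\partial(D_{\{0\}\times L_2}\times B^{k-2})$, where $D_{\{0\}\times L_2}\subset\mathbb{R}^{n-k-2}\times\mathbb{R}^3$ are trivial $2$-disks bounding the pushed-in link $\{0\}\times L_2$. Applying the product boundary formula to each piece yields $\tilde{L_1}=(S^{n-k-3}\times D_{L_1}\times\{0\})\cup(B^{n-k-2}\times L_1\times\{0\})$ and $\tilde{L_2}=(\{0\}\times L_2\times B^{k-2})\cup(D_{\{0\}\times L_2}\times S^{k-3})$, which is the decomposition I will use throughout.

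First I would check that $\tilde{L_1}$ is a trivial $(n-k-1)$-link: since the disks $D_{L_1}$ are disjoint, the $(n-k)$-balls $B^{n-k-2}\times D_{L_1}\times\{0\}$ are disjoint, so their boundary spheres form a trivial link. The one step requiring an actual idea is producing the disks $D_{\{0\}\times L_2}$, and this is exactly where the hypothesis $(n,k)\neq(4,2)$ enters: given $k\geq2$ and $n\geq2k$ one has $n-k-2\geq0$, with equality only for $(n,k)=(4,2)$, so excluding that case gives $n-k-2\geq1$. Hence $\{0\}\times L_2$ lies in $\mathbb{R}^{n-k-2}\times\mathbb{R}^3\cong\mathbb{R}^{n-k+1}$ of dimension $\geq4=2\cdot1+2$, and by \autoref{thm: Whitney embedding theorems} it is isotopic to the trivial $1$-link and so bounds disjoint trivial $2$-disks $D_{\{0\}\times L_2}$ --- even though $L_2$ itself may be knotted or linked inside the $3$-dimensional slice. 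The same product-ball argument then shows $\tilde{L_2}$ is a trivial $(k-1)$-link.

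It then remains to verify $\tilde{L_1}\cap\tilde{L_2}=\emptyset$ and to compute the central slice, both of which are bookkeeping once the construction is fixed. For disjointness I would observe that $\tilde{L_1}$ lies entirely in the locus where the last $\mathbb{R}^{k-2}$-coordinate vanishes, that the piece $D_{\{0\}\times L_2}\times S^{k-3}$ of $\tilde{L_2}$ has last coordinate on $S^{k-3}$ (nonzero when $k\geq3$, empty when $k=2$), and that the remaining piece $\{0\}\times L_2\times B^{k-2}$ can meet $\tilde{L_1}$ only where its last coordinate is $0$, i.e.\ along $\{0\}\times L_2\times\{0\}$; this meets $\tilde{L_1}$ in $\{0\}\times(L_1\cap L_2)\times\{0\}=\emptyset$ since $L_1\cap L_2=\emptyset$ in $\mathcal{K}$. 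For the slice, I would intersect each of the four pieces with $\{0\}\times\mathbb{R}^3\times\{0\}$: the $S^{n-k-3}$- and $S^{k-3}$-factors cannot contain the origin (using $n-k-2\geq1$ and $k\geq2$ respectively), so those two pieces drop out, while $B^{n-k-2}\times L_1\times\{0\}$ and $\{0\}\times L_2\times B^{k-2}$ restrict to $\{0\}\times L_1\times\{0\}$ and $\{0\}\times L_2\times\{0\}$, whose union is $\{0\}\times\mathcal{K}\times\{0\}$.

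The main obstacle is the disk-bounding step for $L_2$: the triviality of $\tilde{L_1}$, the disjointness, and the slice computation are all routine coordinate bookkeeping, but the existence of the trivial disks $D_{\{0\}\times L_2}$ --- and with it the entire construction --- is precisely what fails in the excluded case $(n,k)=(4,2)$, where $L_2$ can be an essentially linked $1$-link in $S^3$ with no extra dimension in which to unlink it.
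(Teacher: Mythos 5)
Your proposal is correct and is essentially identical to the paper's proof: the same construction $\tilde{L_1}=\partial(B^{n-k-2}\times D_{L_1}\times\{0\})$, $\tilde{L_2}=\partial(D_{\{0\}\times L_2}\times B^{k-2})$, the same product-boundary decompositions, and the same slice computation. If anything, you are slightly more careful than the paper, since you make explicit (via the Whitney embedding theorem, using $n-k-2\geq1$) why the trivial disks $D_{\{0\}\times L_2}$ exist and exactly where the hypothesis $(n,k)\neq(4,2)$ enters, steps the paper leaves implicit.
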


\begin{remark}
    In \autoref{thm: $(n,k)$-Kirby diagram induced by $(4,2)$-Kirby diagram}, the link $\tilde{L_1}$ is the trivial $(n-k-1)$-link, and $\tilde{L_2}$ is the trivial $(k-1)$-link. This is because $\tilde{L_1}$ bounds the trivial $(n-k)$-balls $B^{n-k-2}\times D_{L_1}\times\{0\}$, and $\tilde{L_2}$ bounds the trivial $k$-balls $D_{\{0\}\times L_2}\times B^{k-2}$. However, this does not necessarily imply that $\tilde{K}=\tilde{L_1}\cup\tilde{L_2}$ is trivial.
\end{remark}

\begin{definition}
    In \autoref{thm: $(n,k)$-Kirby diagram induced by $(4,2)$-Kirby diagram}, we call $\tilde{L_1}$ the \textit{$(n-k-1)$-link induced by $L_1$}, $\tilde{L_2}$ the \textit{$(k-1)$-link induced by $L_2$}, and $\tilde{\mathcal{K}}$ the \textit{$(n,k)$-Kirby diagram induced by $\mathcal{K}$}.
\end{definition}

\begin{theorem}\label{thm: simplified Kirby diagrams}
    Fix $k\geq2$ and $n\geq2k+1$. Any $(n,k)$-Kirby diagram is isotopic to an $(n,k)$-Kirby diagram $\tilde{\mathcal{K}}$ induced by some $(4,2)$-Kirby diagram $\mathcal{K}$.

\end{theorem}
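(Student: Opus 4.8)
The plan is to invert the spinning construction of \autoref{thm: $(n,k)$-Kirby diagram induced by $(4,2)$-Kirby diagram}: starting from an arbitrary $(n,k)$-Kirby diagram $\mathcal{K}=L_1\cup L_2\subset S^{n-1}$, I would first normalize $L_1$ and then use the Whitney embedding theorems to recognize $L_2$ as the spin of a $1$-link in a distinguished $S^3$. Write $a=|L_1|$ and $b=|L_2|$, and fix the coordinates $S^{n-1}=(\mathbb{R}^{n-k-2}\times\mathbb{R}^3\times\mathbb{R}^{k-2})\cup\{\infty\}$ together with the distinguished $S^3=(\{0\}\times\mathbb{R}^3\times\{0\})\cup\{\infty\}$ used in that theorem.

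First I would standardize $L_1$. By hypothesis $L_1$ is a trivial $(n-k-1)$-link, so it bounds $a$ disjoint $(n-k)$-balls; since any two configurations of disjoint trivially embedded balls of the same cardinality are ambient isotopic, $L_1$ is ambient isotopic to the link $\tilde{\mathcal{L}}_1$ induced by the $a$-component $1$-unlink $\mathcal{L}_1\subset S^3$. Applying this ambient isotopy and carrying $L_2$ along, I may assume $L_1=\tilde{\mathcal{L}}_1$, so that $L_2$ is now a $(k-1)$-link in the complement $C=S^{n-1}\setminus\nu(L_1)$. Every later isotopy will take place inside $C$, hence will automatically fix $L_1$.

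The core step is to put $L_2$ into spun form. Here the hypothesis $n\ge 2k+1$ is decisive: it gives $\dim C=n-1\ge 2(k-1)+2$, so by part (2) of \autoref{thm: Whitney embedding theorems} two components of $L_2$ that are homotopic in $C$ are already isotopic in $C$, while the components, being $(k-1)$-spheres of codimension $n-k\ge k+1$ in $C$, can be made pairwise disjoint and split; thus the isotopy class of $L_2$ in $C$ is determined by the unordered tuple of homotopy classes of its components in $\pi_{k-1}(C)$. Since $C$ is the complement of the trivial $(n-k-1)$-link, the only contributions in degree $k-1$ come from the $a$ meridional $(k-1)$-spheres, so $\pi_{k-1}(C)\cong\pi_{k-1}(\bigvee^a S^{k-1})$, which is $\mathbb{Z}^a$ (by Hurewicz) when $k\ge 3$ and the free group $F_a=\pi_1(S^3\setminus\mathcal{L}_1)$ when $k=2$. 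In either case I would choose a $1$-link $\mathcal{L}_2\subset S^3\setminus\mathcal{L}_1$ realizing the required classes, namely arbitrary linking numbers with the unknots $\mathcal{L}_1$ for $k\ge 3$ and arbitrary free-homotopy classes for $k=2$, and then check that the induced link $\tilde{\mathcal{L}}_2$ of \autoref{thm: $(n,k)$-Kirby diagram induced by $(4,2)$-Kirby diagram} represents exactly these classes in $\pi_{k-1}(C)$. It then follows that $L_2$ is isotopic in $C$ to $\tilde{\mathcal{L}}_2$, and hence $\mathcal{K}$ is isotopic to the diagram $\tilde{\mathcal{K}}_0$ induced by $\mathcal{K}_0=\mathcal{L}_1\cup\mathcal{L}_2$, as required.

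I expect the main obstacle to be precisely this realization step: verifying that the spinning map from $1$-links in $S^3\setminus\mathcal{L}_1$ onto $\pi_{k-1}(C)$ is surjective, and that the individual homotopy classes really do determine the whole multi-component link up to isotopy, which is where the stable-range inequalities and the vanishing of the $S^{n-k}$-contributions in degree $k-1$ must be used carefully (this is also exactly why $n\ge 2k+1$, rather than the weaker $n\ge 2k$ of the construction theorem, is needed). If the framed version is wanted, the one remaining point is that the $\mathbb{Z}$-framings available on $\mathcal{L}_2$ in a $(4,2)$-diagram surject onto $G=\pi_{k-1}(O(n-k))$ of \autoref{thm:nice framings}; this follows from the epimorphism of \autoref{pro: induced maps between homotopy groups of orthogonal groups} together with Bott periodicity, and I would record it as a short final remark rather than fold it into the isotopy argument.
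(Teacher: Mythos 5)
Your proposal is correct and follows essentially the same route as the paper's proof: standardize the dotted link $L_1$ to the induced form $\tilde{L_1}$, observe that for $n\geq 2k+1$ homotopy implies isotopy for the $(k-1)$-link in the complement by \autoref{thm: Whitney embedding theorems}, compute $\pi_{k-1}$ of the complement ($F_a$ for $k=2$, $\mathbb{Z}^a$ for $k\geq3$ --- your version is in fact stated more carefully than the paper's, which writes $\bigoplus_{|L_1|}\mathbb{Z}$ uniformly), and realize each component's class by a $1$-knot in $\mathbb{R}^3\setminus L_1$ using surjectivity of the spinning map, exactly as the paper's map $f$. Your closing framing remark is superfluous here since the theorem is stated for unframed diagrams (the paper explicitly ignores framings at this point), and for $k\geq3$ it would anyway need care: the epimorphism of \autoref{lem:induced map} goes from $\pi_{k-1}(O(k))$, not from the $\mathbb{Z}$-framings of a $(4,2)$-diagram.
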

\begin{proof}
Let $\mathcal{J}=J_1\cup J_2\subset \mathbb{R}^{n-k-2}\times\mathbb{R}^3\times\mathbb{R}^{k-2}$ be an $(n,k)$-Kirby diagram. Since $J_1$ is the trivial $(n-k-1)$-link, we can assume that $J_1=\tilde{L_1}$, where $\tilde{L_1}\subset\mathbb{R}^{n-k-1}\times\mathbb{R}^3\times\mathbb{R}^{k-2}$ is the trivial $(n-k-1)$-link induced by the trivial $1$-link $L_1\subset\mathbb{R}^3$. 

We need to show that the $(k-1)$-link $J_2\subset\mathbb{R}^{n-k-2}\times\mathbb{R}^3\times\mathbb{R}^3\setminus\tilde{L_1}$ is isotopic to a $(k-1)$-link $\tilde{L_2}$ induced by some $1$-link $L_1\subset \mathbb{R}^3\setminus L_1$. 

Consider the map \[f:\pi_1(\mathbb{R}^3\setminus L_1)\rightarrow\pi_{k-1}(\mathbb{R}^{n-k-2}\times\mathbb{R}^3\times\mathbb{R}^{k-2}\setminus\tilde{L_1})\] defined by $f([\alpha])=[\tilde{\alpha}]$, where $\alpha$ is a $1$-knot in $\mathbb{R}^3\setminus L_1$. Here, any element of $\pi_1(\mathbb{R}^3\setminus L_1)$ is realized by a $1$-knot in $\mathbb{R}^3\setminus L_1$. We note that \[\pi_1(\mathbb{R}^3\setminus L_1)\cong *_{|L_1|}\mathbb{Z} \quad\text{and}\quad \pi_{k-1}(\mathbb{R}^{n-k-2}\times\mathbb{R}^3\times\mathbb{R}^{k-2}\setminus \tilde{L_1})\cong\bigoplus_{|L_1|}\mathbb{Z}.\] By the construction of the induced $(k-1)$-knot $\tilde{\alpha}$, $f$ is surjective. Let \[J_2=\beta_1\cup\cdots\cup\beta_m\subset\mathbb{R}^{n-k-2}\times\mathbb{R}^3\times\mathbb{R}^{k-2}\setminus\tilde{L_1}.\] Then for every $(k-1)$-knot $\beta_i$, there exists a $1$-knot $\alpha_i\subset\mathbb{R}^3\setminus L_1$ such that $f([\alpha_i])=[\tilde{\alpha_i}]=[\beta_i]$. Let \[L_2=\alpha_1\cup\cdots\cup\alpha_m\subset\mathbb{R}^3\setminus L_1.\] We can assume that $L_2$ is a $1$-link in $\mathbb{R}^3\setminus L_1$ after a small perturbation if necessary. Then the $(k-1)$-link $\tilde{L_2}$ is homotopic to $J_2$ in $\mathbb{R}^{n-k-2}\times\mathbb{R}^3\times\mathbb{R}^{k-2}\setminus\tilde{L_2}$. 

Since homotopy implies isotopy for $(k-1)$-manifolds in $(n-1)$-manifolds when $k\geq2$ and $n\geq2k+1$ (see \autoref{thm: Whitney embedding theorems}), we conclude that $J_2$ is isotopic to $\tilde{L_2}$ in $\mathbb{R}^{n-k-2}\times\mathbb{R}^3\times\mathbb{R}^{k-2}\setminus\tilde{L_2}$. Thus, $\mathcal{J}$ is isotopic to the $(n,k)$-Kirby diagram $\tilde{\mathcal{K}}=\tilde{L_1}\cup\tilde{L_2}$ induced by the $(4,2)$-Kirby diagram $\mathcal{K}=L_1\cup L_2$.
\end{proof}

\begin{remark}
    In the proof of \autoref{thm: simplified Kirby diagrams}, there are many different ways of perturbation of $L_2$, meaning that non-isotopic $(4,2)$-Kirby diagrams $\mathcal{K}$ and $\mathcal{K'}$ may induce isotopic $(n,k)$-Kirby diagrams $\tilde{\mathcal{K}}$ and $\tilde{\mathcal{K}'}$ are isotopic to $\mathcal{J}$. However, in \autoref{thm: crossing change} and \autoref{thm: other-move}, we will show that if $\mathcal{K}$ and $\mathcal{K}'$ are related by a crossing change and a $\ddagger$-move, then their induced $(n,k)$-Kirby diagrams $\tilde{\mathcal{K}}$ and $\tilde{\mathcal{K}'}$ are isotopic (see \autoref{A1}).
\end{remark}

From now on, we will represent $(n,k)$-Kirby diagrams using their corresponding $(4,2)$-Kirby diagrams, where $k\geq2$ and $n\geq2k+1$.

\begin{theorem}\label{thm: crossing change}
    Fix $k\geq2$ and $n\geq2k+1$. Let $\mathcal{K}=L_1\cup L_2\subset \mathbb{R}^3$ be a $(4,2)$-Kirby diagram, and let $\mathcal{K}'=L_1\cup L_2'\subset \mathbb{R}^3$ be another $(4,2)$-Kirby diagram, where $L_2'$ is obtained from $L_2$ by a crossing change, as shwon in \autoref{A1}. Then the induced $(n,k)$-Kirby diagrams $\tilde{\mathcal{K}}$ and $\tilde{\mathcal{K}'}$ are isotopic.
\end{theorem}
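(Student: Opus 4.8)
The plan is to reduce the statement to the stable-range principle that homotopy implies isotopy, exactly as in the proof of \autoref{thm: simplified Kirby diagrams}. The guiding observation is that a crossing change is never an isotopy of $L_2$ inside $\mathbb{R}^3$, but it is always realized by a \emph{homotopy} of $L_2$: at the crossing one strand is passed through the other. Since the move is supported in a small ball $B\subset\mathbb{R}^3$ that we may take disjoint from $L_1$ (the crossing is between two strands of $L_2$, and $L_1\cap L_2=\emptyset$), it produces a homotopy $h_s$ from $L_2$ to $L_2'$ taking place entirely in $\mathbb{R}^3\setminus L_1$. So the first step is simply to record this homotopy $h_s$.

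The central step is to promote $h_s$ to a homotopy $\tilde{h}_s$ from $\tilde{L_2}$ to $\tilde{L_2'}$ inside $\mathbb{R}^{n-1}\setminus\tilde{L_1}$. Recall from \autoref{thm: $(n,k)$-Kirby diagram induced by $(4,2)$-Kirby diagram} that $\tilde{L_2}=(\{0\}\times L_2\times B^{k-2})\cup(D_{\{0\}\times L_2}\times S^{k-3})$. On the first piece I would take $\{0\}\times h_s\times\mathrm{id}_{B^{k-2}}$, and on the second piece I would drag the spanning disks $D_{\{0\}\times L_2}$ along with their moving boundary, crossed with $\mathrm{id}_{S^{k-3}}$; because only a homotopy of the underlying map is needed, the disks need not stay embedded. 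The one thing genuinely requiring care — the main obstacle — is confirming that the whole track of $\tilde{h}_s$ avoids $\tilde{L_1}=(S^{n-k-3}\times D_{L_1}\times\{0\})\cup(B^{n-k-2}\times L_1\times\{0\})$. This is a coordinate check: the piece from $\{0\}\times L_2\times B^{k-2}$ has vanishing $\mathbb{R}^{n-k-2}$-coordinate and $\mathbb{R}^3$-coordinate in $\mathbb{R}^3\setminus L_1$, so it meets neither summand of $\tilde{L_1}$ (here $n-k-2\geq k-1\geq 1$ guarantees $0\notin S^{n-k-3}$), while the piece from $D_{\{0\}\times L_2}\times S^{k-3}$ has nonzero $\mathbb{R}^{k-2}$-coordinate whereas $\tilde{L_1}$ lies in $\mathbb{R}^{k-2}$-coordinate $0$, so it is automatically disjoint from $\tilde{L_1}$ wherever the disks wander. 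The degenerate case $k=2$ (where $S^{k-3}=\emptyset$ and only the first piece survives) is handled by the same computation.

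With $\tilde{h}_s$ in hand I would finish by invoking part $(2)$ of \autoref{thm: Whitney embedding theorems}: $\tilde{L_2}$ and $\tilde{L_2'}$ are homotopic embeddings of a $(k-1)$-manifold into the $(n-1)$-manifold $\mathbb{R}^{n-1}\setminus\tilde{L_1}$, and $n-1\geq 2k=2(k-1)+2$ since $n\geq 2k+1$, so they are isotopic in $\mathbb{R}^{n-1}\setminus\tilde{L_1}$. Extending this isotopy ambiently while fixing the unchanged $\tilde{L_1}$ carries $\tilde{\mathcal{K}}=\tilde{L_1}\cup\tilde{L_2}$ to $\tilde{\mathcal{K}'}=\tilde{L_1}\cup\tilde{L_2'}$, which is the claim.

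As a cross-check, the needed homotopy can also be seen through the surjection $f\colon\pi_1(\mathbb{R}^3\setminus L_1)\to\pi_{k-1}(\mathbb{R}^{n-1}\setminus\tilde{L_1})$ from the proof of \autoref{thm: simplified Kirby diagrams}: a crossing change preserves the free homotopy class in $\pi_1(\mathbb{R}^3\setminus L_1)$ of each component of $L_2$, and since $f$ lands in an abelian group it sends these unchanged classes to the same elements, giving $[\tilde{L_2}]=[\tilde{L_2'}]$ componentwise and hence the homotopy above. Either way, once the induced homotopy is built and verified to stay off $\tilde{L_1}$, the stable-range isotopy theorem does the rest.
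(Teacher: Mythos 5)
Your proof is correct, but it runs at a different dimensional level than the paper's. The paper never builds a homotopy of the $(k-1)$-links at all: it works with the $1$-dimensional cores $\{0\}\times L_2\times\{0\}$ and $\{0\}\times L_2'\times\{0\}$ in the slice $\mathbb{R}^{n-k-2}\times\mathbb{R}^3\times\{0\}\setminus\tilde{L_1}$, notes they are homotopic there because a crossing change is a homotopy in $\mathbb{R}^3\setminus L_1$, and applies homotopy-implies-isotopy to \emph{$1$-manifolds} in an $(n-k+1)$-manifold --- realized concretely by pushing the overstrand of \autoref{A1} into a different level set $\{t\}\times\mathbb{R}^3\times\{0\}$, $t\neq 0\in\mathbb{R}^{n-k-2}$, isotoping there, and pulling it back; the isotopy of the cores then carries the induced $(k-1)$-links along. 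You instead promote the crossing-change homotopy to a homotopy of the full spun links $\tilde{L_2}\simeq\tilde{L_2'}$ in $\mathbb{R}^{n-1}\setminus\tilde{L_1}$ and invoke part (2) of \autoref{thm: Whitney embedding theorems} for $(k-1)$-manifolds in dimension $n-1\geq 2(k-1)+2$, which consumes the hypothesis $n\geq 2k+1$ exactly. The trade-off: the paper's argument is more economical, stays visibly within the ``induced diagram'' format (so the resulting isotopy obviously ends at the standard $\tilde{L_2'}$), and its core step needs only $n-k-2\geq 1$; your argument is more hands-on about where the homotopy lives, and your coordinate check of disjointness from $\tilde{L_1}$ makes explicit something the paper leaves implicit (namely that the induced links follow the cores without touching $\tilde{L_1}$). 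Conversely, the paper's proof quietly assumes that an isotopy of the cores induces an isotopy of the spun links, a step your construction replaces with the Whitney theorem.

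One small point to patch in your write-up: your dragged family of spanning disks, obtained from the homotopy extension property, ends at time $s=1$ at \emph{some} disk maps bounding $\{0\}\times L_2'$, not necessarily at the standard trivial disks $D_{\{0\}\times L_2'}$ used in \autoref{thm: $(n,k)$-Kirby diagram induced by $(4,2)$-Kirby diagram} to define $\tilde{L_2'}$. Since on that piece disjointness from $\tilde{L_1}$ is automatic (the $S^{k-3}$ factor keeps the $\mathbb{R}^{k-2}$-coordinate nonzero) and $\mathbb{R}^{n-k-2}\times\mathbb{R}^3$ is contractible, any two disk maps with the same boundary are homotopic rel boundary, so you can concatenate one further homotopy to land exactly on $\tilde{L_2'}$; without this sentence your $\tilde{h}_s$ does not literally terminate at the embedding to which you apply \autoref{thm: Whitney embedding theorems}.
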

    \begin{proof}
        We use the same notation as in \autoref{thm: $(n,k)$-Kirby diagram induced by $(4,2)$-Kirby diagram}. Since $L_2$ and $L_2'$ are related by a crossing change, they are homotopic in $\mathbb{R}^3\setminus L_1$. Clearly, the two $1$-links $\{0\}\times L_2\times\{0\}$ and $\{0\}\times L_2'\times\{0\}$ are homotopic in $\mathbb{R}^{n-k-2}\times\mathbb{R}^3\times\{0\}\setminus \tilde {L_1}$. Since homotopy implies isotopy for $1$-manifolds in $(n-k+1)$-manifolds (see \autoref{thm: Whitney embedding theorems}), these two links are isotopic in $\mathbb{R}^{n-k-2}\times\mathbb{R}^3\times\{0\}\setminus \tilde {L_1}$. 
        
        More precisely, we can push the overstrand in the top left of \autoref{A1} to a different level set $\{t\}\times\mathbb{R}^{3}\times\{0\}\setminus \tilde{L_1}$, where $t\neq 0\in \mathbb{R}^{n-k-2}$, perform a small isotopy in that level set, and then pull it back to the lowerstrand in the top right of \autoref{A1}. Therefore, the induced $(k-1)$-links $\tilde{L_2}$ and $\tilde{L_2'}$ are isotopic in $\mathbb{R}^{n-k-2}\times\mathbb{R}^3\times\mathbb{R}^{k-2}\setminus\tilde{L_1}$, implying that $\tilde{\mathcal{K}}$ and $\tilde{\mathcal{K}'}$ are isotopic.
    \end{proof}

\begin{theorem}\label{thm: other-move}
    Fix $k\geq3$ and $n\geq2k+1$. Let $\mathcal{K}=L_1\cup L_2\subset \mathbb{R}^3$ be a $(4,2)$-Kirby diagram, and let $\mathcal{K}'=L_1\cup L_2'\subset \mathbb{R}^3$ be another $(4,2)$-Kirby diagram, where $L_2'$ is obtained from $L_2$ by a $\ddagger$-move, as shown in \autoref{A1}. Then the induced $(n,k)$-Kirby diagrams $\tilde{\mathcal{K}}$ and $\tilde{\mathcal{K}'}$ are isotopic.
\end{theorem}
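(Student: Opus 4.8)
The plan is to follow the strategy of \autoref{thm: crossing change}, adopting throughout the notation of \autoref{thm: $(n,k)$-Kirby diagram induced by $(4,2)$-Kirby diagram}: write $\tilde{L_1}=\partial(B^{n-k-2}\times D_{L_1}\times\{0\})$ and $\tilde{L_2}=\partial(D_{\{0\}\times L_2}\times B^{k-2})$, and similarly for $\tilde{L_2'}$. Since the $\ddagger$-move leaves $L_1$ untouched, we have $\tilde{L_1}=\tilde{L_1'}$, and it suffices to produce an isotopy from $\tilde{L_2}$ to $\tilde{L_2'}$ in the complement $\mathbb{R}^{n-k-2}\times\mathbb{R}^3\times\mathbb{R}^{k-2}\setminus\tilde{L_1}$. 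First I would localize: the $\ddagger$-move depicted in \autoref{A1} is supported in a ball $B\subset\mathbb{R}^3$ with $L_2$ and $L_2'$ agreeing outside $B$, so it is enough to exhibit the required isotopy inside the induced region lying over $B$, rel its boundary, keeping the link embedded and disjoint from $\tilde{L_1}$ throughout.

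The essential new ingredient, and the reason the hypothesis is strengthened to $k\geq3$, is that the $\ddagger$-move cannot be realized merely by displacing a strand into the first factor $\mathbb{R}^{n-k-2}$ as in \autoref{thm: crossing change}; instead it is realized using the last factor $\mathbb{R}^{k-2}$. The induced $(k-1)$-spheres carry the product direction $B^{k-2}$ coming from the formula $\tilde{L_2}=\partial(D_{\{0\}\times L_2}\times B^{k-2})$, so one can push the relevant sheet of $\tilde{L_2}$ into the positive $\mathbb{R}^{k-2}$ directions, perform the re-routing prescribed by the $\ddagger$-move in that slice, and push it back. This maneuver requires at least one extra transverse direction, i.e.\ $k-2\geq1$, which is exactly where $k\geq3$ enters; a single horizontal displacement, already available for $k\geq2$, does not suffice here.

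Having produced a homotopy from $\tilde{L_2}$ to $\tilde{L_2'}$, I would upgrade it to an ambient isotopy by invoking part $(2)$ of \autoref{thm: Whitney embedding theorems}: since $\dim(\mathbb{R}^{n-1}\setminus\tilde{L_1})=n-1\geq2k=2(k-1)+2=2\cdot\dim(\tilde{L_2})+2$ when $n\geq2k+1$, homotopic embeddings of the $(k-1)$-link $\tilde{L_2}$ are isotopic, exactly as in \autoref{thm: simplified Kirby diagrams} and \autoref{thm: crossing change}. This yields an isotopy of $\tilde{L_2}$ to $\tilde{L_2'}$ rel the complement of the support, and hence $\tilde{\mathcal{K}}$ and $\tilde{\mathcal{K}'}$ are isotopic. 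The main obstacle I anticipate is the embedded, $\tilde{L_1}$-disjoint realization of the sheet-pushing step: one must verify that the $(k-1)$-dimensional sheet, once pushed into $\mathbb{R}^{k-2}$ and re-routed, neither self-intersects nor meets the induced balls $B^{n-k-2}\times D_{L_1}\times\{0\}$ bounded by $\tilde{L_1}$, and that the support stays inside the region over $B$ so the pieces match the unchanged part of the diagram. Checking these transversality and disjointness conditions, together with the dimension bookkeeping that pins down $k\geq3$, is the technical heart of the argument.
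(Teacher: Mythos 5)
Your proposal is correct in substance and identifies the same key mechanism as the paper --- the extra factor $\mathbb{R}^{k-2}$, nontrivial exactly when $k\geq3$, is what allows the re-routing to happen in a level set $\{0\}\times\mathbb{R}^3\times\{t\}$ with $t\neq0$, which is disjoint from $\tilde{L_1}$ --- but the implementation is genuinely different. The paper never moves the $(k-1)$-dimensional sheet at all: following the third row of \autoref{A1}, it writes both $L_2$ and $L_2'$ as surgeries on a common link $L_2''$ along $2$-dimensional $1$-handles $h_C, h_D$ with cores $C, D$, thickens these to $k$-dimensional $1$-handles $H_C=\{0\}\times h_C\times B^{k-2}$ and $H_D=\{0\}\times h_D\times B^{k-2}$, and then isotopes only the $1$-dimensional core $\{0\}\times C\times\{0\}$ to $\{0\}\times D\times\{0\}$ through a nonzero level; this induces an isotopy of the handles and hence of the surgered links $\tilde{L_2}$ and $\tilde{L_2'}$, with embeddedness and disjointness from $\tilde{L_1}$ essentially automatic because one is moving an arc by general position. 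Your route instead produces a homotopy of the $(k-1)$-sheet and upgrades it via part $(2)$ of \autoref{thm: Whitney embedding theorems}; the dimension count $n-1\geq 2(k-1)+2$ is correct, and this upgrade is exactly the device the paper itself uses in \autoref{thm: simplified Kirby diagrams}, so it is legitimate here. Two caveats. First, once you invoke Whitney, the ``technical heart'' you anticipate largely evaporates: a homotopy need not be embedded or self-avoiding, so the only thing to verify is that its track misses $\tilde{L_1}$ --- and this genuinely requires your explicit level-pushing (or a computation of $\pi_{k-1}$ of the complement, where the opposite orientations of the two strands make the classes of $\tilde{L_2}$ and $\tilde{L_2'}$ agree), since general position alone fails: a generic homotopy track of a $(k-1)$-manifold in $\mathbb{R}^{n-1}$ meets the $(n-k-1)$-dimensional $\tilde{L_1}$ in isolated points. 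Second, part $(2)$ of \autoref{thm: Whitney embedding theorems} yields a global isotopy, not one rel the complement of the support of the move, so your closing ``rel'' claim is not justified as stated; it is, however, unnecessary, since an isotopy of $\tilde{L_2}$ in the complement of $\tilde{L_1}$ is all the theorem asserts. In short, the paper's surgery-and-core argument buys a completely explicit isotopy with no appeal to the metastable range, while your argument buys brevity at the cost of that extra input.
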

    \begin{proof}
        Again, we use the same notation as in \autoref{thm: $(n,k)$-Kirby diagram induced by $(4,2)$-Kirby diagram}. We can consider $L_2$ as the result of performing surgery on $L_2''$ along a $2$-dimensional $1$-handle $h_C\subset\mathbb{R}^3$, whose core is $C$ in \autoref{A1}. Similarly, $L_2'$ is obtained from $L_2''$ by surgery along another $2$-dimensional $1$-handle $h_D\subset\mathbb{R}^3$, whose core is $D$ in \autoref{A1}. Although $h_C$ and $h_D$ are not explicitly drawn, we can imagine the trivial bands in the figure. 
        
        Let $H_C=\{0\}\times h_c\times B^{k-2}$ and $H_D=\{0\}\times h_D\times B^{k-2}$ be $k$-dimensional $1$-handles in $\{0\}\times\mathbb{R}^3\times\mathbb{R}^{k-2}\setminus\tilde{L_1}$, obtained from the $2$-dimensional $1$-handles $h_C$ and $h_D$ in $\mathbb{R}^3\setminus L_1$ by thickening them with $B^{k-2}\subset\mathbb{R}^{k-2}$, respectively. Then the induced $(k-1)$-links $\tilde{L_2}$ and $\tilde{L_2}'$ are obtained from $\tilde{L_2''}$ by surgery along $H_C$ and $H_D$, respectively. 
        
        If there exists an isotopy from the core $\{0\}\times C\times\{0\}$ to $\{0\}\times D\times\{0\}$ in $\{0\}\times\mathbb{R}^3\times\mathbb{R}^{k-2}\setminus \tilde{L_1}$, then it induces an isotopy from the $1$-handle $H_C$ to $H_D$ in $\{0\}\times\mathbb{R}^3\times\mathbb{R}^{k-2}\setminus \tilde{L_1}$, so the results of surgery, $\tilde{L_2}$ and $\tilde{L_2}'$ are isotopic. 
        
        We can push $\{0\}\times C\times\{0\}$ in $\{0\}\times\mathbb{R}^3\times\{0\}$ into a different level set $\{0\}\times\mathbb{R}^3\times\{t\}\subset\{0\}\times\mathbb{R}^3\times\mathbb{R}^{k-2}$, where $t\neq 0\in \mathbb{R}^{k-2}$, perform a small isotopy in that level set, and pull it back to $\{0\}\times D\times \{0\}$. Therefore, $\tilde{L_2}$ and $\tilde{L_2}'$ are isotopic in $\mathbb{R}^{n-k-2}\times\mathbb{R}^3\times\mathbb{R}^{k-2}\setminus\tilde{L_1}$, implying that $\tilde{\mathcal{K}}$ and $\tilde{\mathcal{K}'}$ are isotopic.
    \end{proof}

\begin{figure}[ht!]
 \labellist
 \small\hair 2pt
 
 \pinlabel {\large{crossing change}}  at 230 420
 \pinlabel {$L_2$} at 95 380
 \pinlabel {$L_2'$} at 365 380

 \pinlabel {\large{$\ddagger$-move}}  at 230 260
 \pinlabel {$L_1$} at 80 320
 \pinlabel {$L_2$} at 140 280
 \pinlabel {$L_1$} at 353 320
 \pinlabel {$L_2'$} at 413 280
 \pinlabel {$L_1$} at 80 115
 \pinlabel {$L_2^{''}$} at 140 75
 \pinlabel {$L_1$} at 353 115
 \pinlabel {$L_2^{''}$} at 413 75
 \pinlabel {\textcolor[RGB]{56,151,241}{$C$}}  at 80 85
 \pinlabel {\textcolor[RGB]{56,151,241}{$D$}}  at 353 85

 \pinlabel {\large{$\parallel$}}  at 95 180
 \pinlabel {\large{$\parallel$}}  at 365 180
 \endlabellist
 \centering
 
 \includegraphics[width=.7\textwidth]{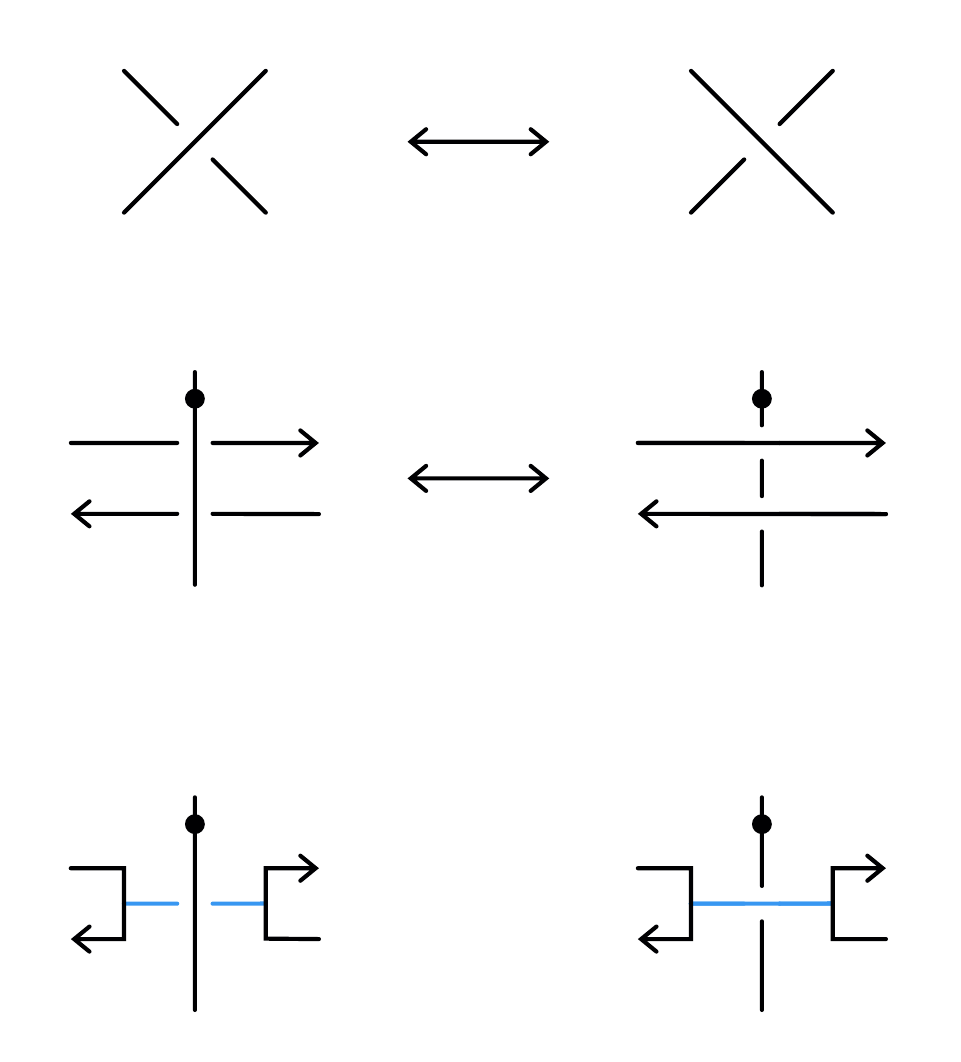}
 \caption{\textbf{First row}: A crossing change from $L_2$ to $L_2'$ induces an isotopy from the $(k-1)$-link $\tilde{L_2}$ to $\tilde{L_2'}$ in $\mathbb{R}^{n-k-2}\times\mathbb{R}^3\times\mathbb{R}^{k-2}\setminus \tilde{L_1}$, where $k\geq2$. \textbf{Second row}: A $\ddagger$-move from $L_2$ to $L_2'$ induces an isotopy from the $(k-1)$-link $\tilde{L_2}$ to $\tilde{L_2'}$ in $\mathbb{R}^{k-2}\times\mathbb{R}^3\times\mathbb{R}^{n-k-2}$. Note that two parallel strands with opposite orientations belong to the same component of $L_2$. \textbf{Third row}: $L_2$ is obtained from $L_2''$ by surgery along a $2$-dimensional $1$-handle $h_C$, whose core is $C$. Similarly, $L_2'$ is obtained from $L_2''$ by surgery along a $2$-dimensional $1$-handle $h_D$, whose core is $D$.
 \label{A1}}
\end{figure}

\begin{example}
    See \autoref{A2}.
    \begin{enumerate}
        \item Consider $\mathcal{K}_1$ and $\mathcal{K}_2$. 
        \begin{enumerate}
            \item When $k=2$ and $n=4$, $\mathcal{K}_1$ represents a Mazur manifold, which is contractible but not diffeomorphic to $B^4$ \cite{mazur1961note}, and $\mathcal{K}_2$ represents $B^4$. Here, $\mathcal{K}_2$ is a $(1,2)$-cancelling pair.
            \item When $k\geq2$ and $n\geq 2k+1$, $\mathcal{K}_1$ and $\mathcal{K}_2$ are related by a crossing change (when $k\geq2$) in the first row of \autoref{A1}, or a $\ddagger$-move (when $k\geq3$) in the second row of \autoref{A1}. Thus, $\mathcal{K}_1$ and $\mathcal{K}_2$ represent diffeomorphic $n$-manifolds. Here, $\mathcal{K}_2$ is a cancelling $(k-1,k)$-pair, so it represents $B^n$; see \autoref{A3}.
        \end{enumerate}
        \item Consider $\mathcal{K}_3$ and $\mathcal{K}_4$.
        \begin{enumerate}
            \item When $k=2$ and $n\geq4$, the fundamental groups of the induced $n$-manifolds are $\pi_1(M_{\mathcal{K}_3})=\langle x_1, x_2\mid x_1x_2x_1x_2^{-1}x_1^{-1}x_2^{-1}\rangle\ncong\mathbb{Z}$ and $\pi_1(M_{\mathcal{K}_4})=\langle x_1,x_2\mid x_1x_2^{-1}\rangle\cong\mathbb{Z}$, so $\mathcal{K}_3$ and $\mathcal{K}_4$ represent non-diffeomorphic $n$-manifolds. In $\mathcal{K}_4$, we can remove a cancelling $(k-1,k)$-pair, so $\mathcal{K}_4$ represents $S^{k-1}\times B^{n-k+1}$. 
            \item When $k\geq3$ and $n\geq2k+1$, $\mathcal{K}_3$ and $\mathcal{K}_4$ are related by  $\ddagger$-moves, so they represents diffeomorphic $n$-manifolds.
        \end{enumerate}
        \item Consider $\mathcal{K}_5$ and $\mathcal{K}_6$.
        \begin{enumerate}
            \item When $k=2$ and $n\geq4$, the fundamental groups of the induced $n$-manifolds are $\pi_1(M_{\mathcal{K}_5})=\langle x_1,x_2\mid x_1x_2x_1^{-1}x_2^{-1}\rangle\cong\mathbb{Z}$ and $\pi_1(M_{\mathcal{K}_6})=\langle x_1\rangle*\langle x_2\rangle\cong\mathbb{Z}*\mathbb{Z}$, so they represent non-diffeomorphic $n$-manifolds.
            \item When $k\geq3$ and $n\geq2k+1$, $\mathcal{K}_5$ and $\mathcal{K}_6$ are related by a $\ddagger$-move, so they represent diffeomorphic $n$-manifolds. In particular, $M_{\mathcal{K}_6}\cong \natural^2(S^{k-1}\times B^{n-k+1})\natural (S^k\times B^{n-k})$.
        \end{enumerate}
    \end{enumerate}
\end{example}

\begin{figure}[ht!]
 \labellist
 \small\hair 2pt
 \pinlabel {\large{$\mathcal{K}_1$}}  at 115 440
 \pinlabel {\large{$\mathcal{K}_2$}}  at 375 440
 \pinlabel {\large{$\mathcal{K}_3$}}  at 115 250
 \pinlabel {\large{$\mathcal{K}_4$}}  at 375 250
 \pinlabel {\large{$\mathcal{K}_5$}}  at 115 30
 \pinlabel {\large{$\mathcal{K}_6$}}  at 375 30

 \pinlabel{$0$} at 50 480
 \pinlabel{$0$} at 330 500

 \pinlabel{$0$} at 115 295
 \pinlabel{$0$} at 375 320

 \pinlabel{$0$} at 160 130
 \pinlabel{$0$} at 450 130

 \endlabellist
 \centering
 
 \includegraphics[width=.68\textwidth]{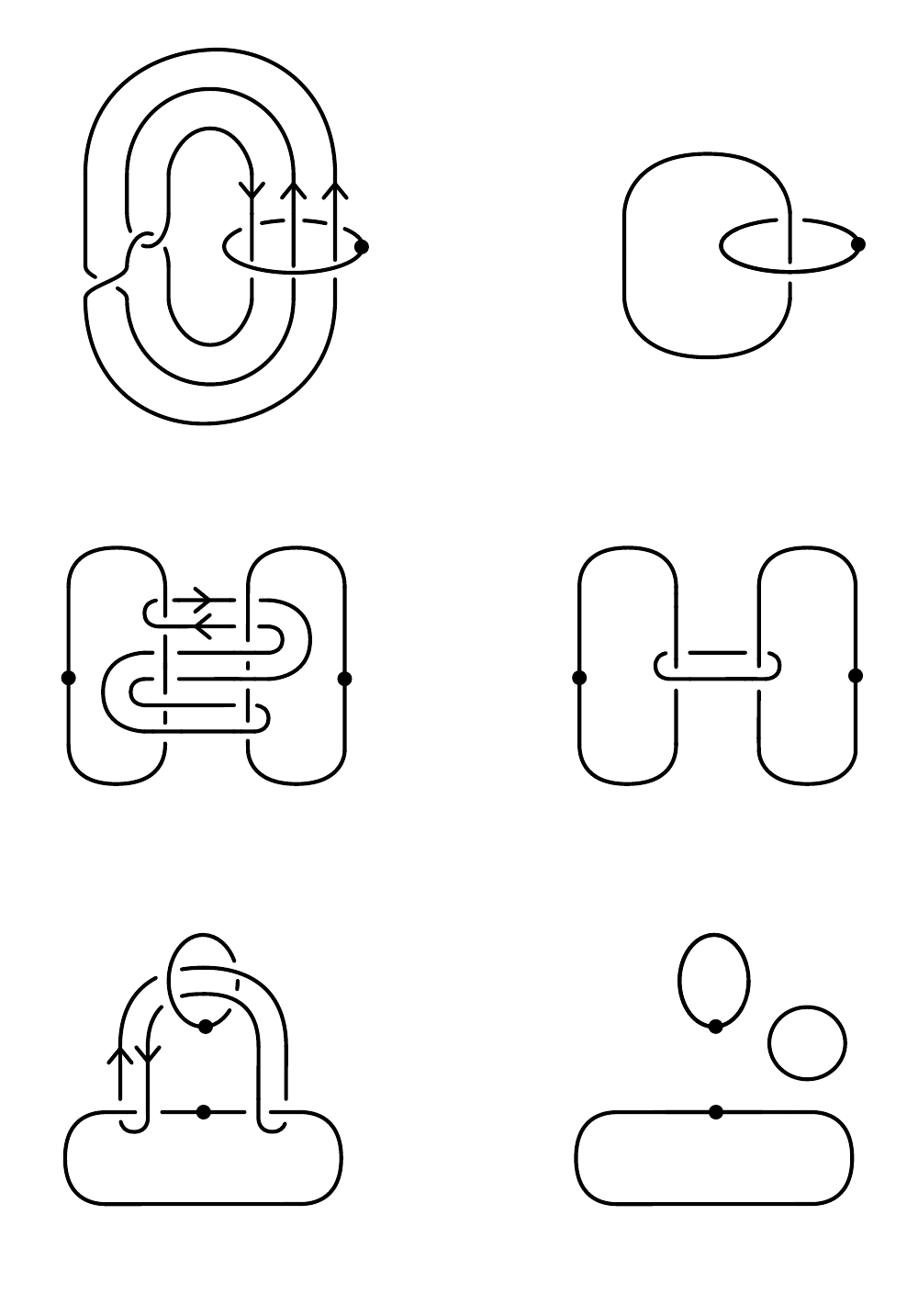}
 \caption{Some $(n,k)$-Kirby diagrams.
 \label{A2}}
\end{figure}

\begin{remark}
 Let $\mathcal{K}$ and $\mathcal{K}'$ be $(n,k)$-Kirby diagrams, where $k\geq2$ and $n\geq2k+1$. We can visualize isotopies of $(n,k)$-Kirby diagrams, $(k-1)$-handle slides, $k$-handle slides, and the creation/annihilation of cancelling $(k-1,k)$-pair on their $(4,2)$-Kirby diagrams; see \autoref{A3}. For some special isotopies of a $(k-1)$-link, which are induced by crossing change and $\ddagger$-move, see \autoref{A1}. By \cite{cerf1970stratification}, if two $(n,k)$-Kirby diagrams, $\mathcal{K}$ and $\mathcal{K}'$ are related by these moves, then the two induced $n$-dimensional $k$-handlebodies, $M_\mathcal{K}$ and $M_{\mathcal{K}'}$, are diffeomorphic.
   
\end{remark}

\begin{figure}[ht!]
 \labellist
 \small\hair 2pt
 \pinlabel {\large{$(k-1)$-handle slide}}  at 230 320

 \pinlabel {\large{$k$-handle slide}}  at 230 170

 \pinlabel {\large{cancelling $(k-1,k)$-pair}}  at 230 15

 \pinlabel {$t_1$}  at 70 200
 \pinlabel {$t_2$}  at 170 200
 \pinlabel {$t_1+t_2$}  at 340 195
 \pinlabel {$t_2$}  at 430 199

 \pinlabel {$t$} at 300 60

 \pinlabel {$A$}  at 60 415
 \pinlabel {$B$}  at 170 415
 \pinlabel {$A_{new}=A\#_hB'$} at 325 418

 \pinlabel {$C$}  at 60 260
 \pinlabel {$D$}  at 170 260
 \pinlabel {$C_{new}=C\#_hD'$} at 325 265

 \pinlabel {$E$}  at 170 130
 \pinlabel {$F$}  at 285 130
 
 \endlabellist
 \centering
 
 \includegraphics[width=.7\textwidth]{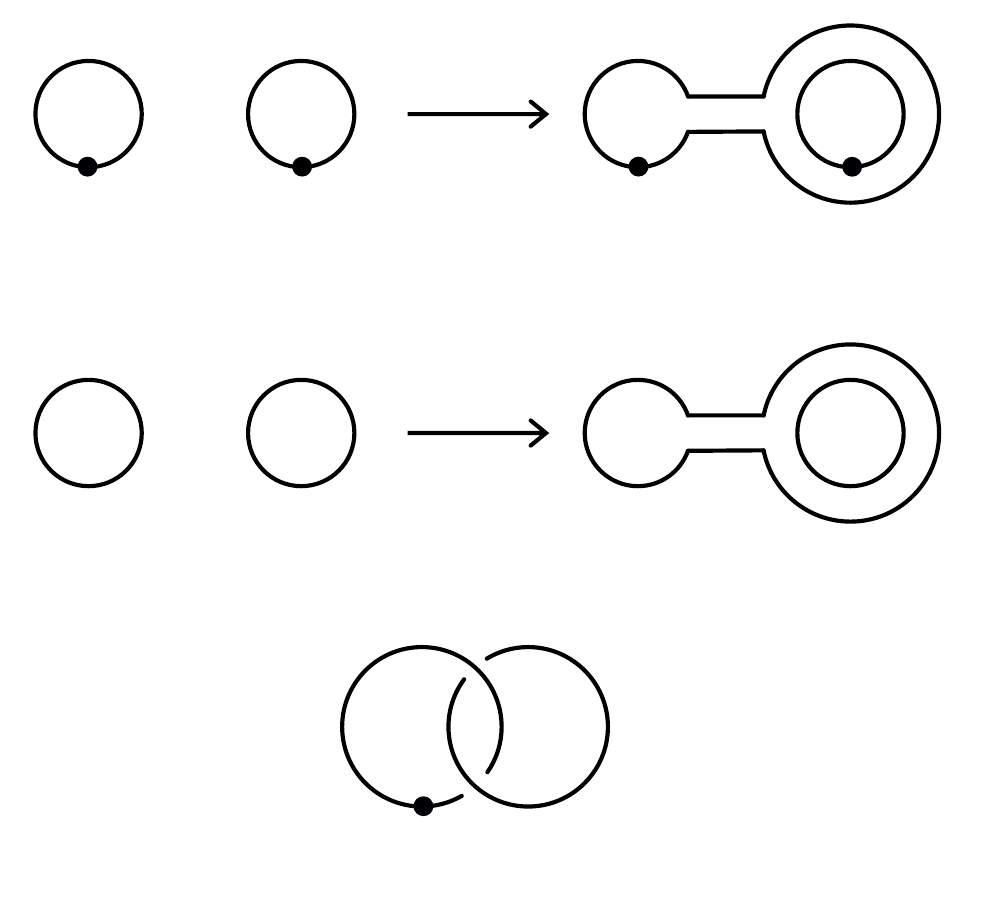}
 \caption{Moves defined on $(n,k)$-Kirby diagrams, where $k\geq2$ and $n\geq2k+1$. A dotted circle in $\{0\}\times\mathbb{R}^3\times\{0\}$ represents a dotted trivial $(n-k-1)$-sphere in $\mathbb{R}^{n-k-2}\times\mathbb{R}^3\times\mathbb{R}^{k-2}$. A circle with a number $t$ in $\{0\}\times\mathbb{R}^3\times\{0\}$ represent a $t$-framed trivial $(k-1)$-sphere in $\mathbb{R}^{n-k-2}\times\mathbb{R}^3\times\mathbb{R}^{k-2}$, where $t\in G$. \textbf{First row}: A $(k-1)$-handle slide of $A$ over $B$ is a dotted $(n-k-1)$-sphere $A_{new}=A\#_hB'$ obtained from $A\cup B'$ by surgery along an $(n-k)$-dimensional $1$-handle $h=B^1\times B^{n-k-1}$ attached to $A$ and a parallel copy $B'$ of $B$. Here, we can always isotope the core $B^1\times\{0\}$ of the $1$-handle $B^1\times B^{n-k-1}$ into $\{0\}\times\mathbb{R}^3\times\{0\}\subset \mathbb{R}^{n-k-2}\times\mathbb{R}^3\times\mathbb{R}^{k-2}$, because homotopy implies isotopy for $1$-manifolds in $\mathbb{R}^{n-1}$. In simple terms, $A_{new}$ is obtained from $A\cup B'$ by surgery along a visible $2$-dimensional band $B^1\times B^1\times\{0\}$ of the $1$-handle $B^1\times B^1\times B^{n-k-2}=B^1\times B^{n-k-1}$. \textbf{Second row}: A $k$-handle slide of $t_1$-framed $C$ over $t_2$-framed $D$ is a $(t_1+t_2)$-framed $(k-1)$-sphere $C_{new}=C\#_hD'$ obtained from $C\cup D'$ by surgery along a $k$-dimensional $1$-handle $h=B^1\times B^{k-1}$ attached to $C$ and a parallel copy $D'$ of $D$. Note that there is no linking between a framed $(k-1)$-sphere $D$ and its parallel copy $D'$ when $k\geq2$ and $n\geq2k+1$. Similarly, the core of the $1$-handle can be isotoped into $\{0\}\times\mathbb{R}^3\times\{0\}\subset \mathbb{R}^{n-k-2}\times\mathbb{R}^3\times\mathbb{R}^{k-2}$, and $C_{new}$ is obtained from $C\cup D'$ by surgery along a visible band $B^1\times B^1\times\{0\}$ of the $1$-handle $B^1\times B^1\times B^{k-2}=B^1\times B^{k-1}$. \textbf{Third row}: A cancelling $(k-1,k)$-pair is the union of a dotted $(n-k-1)$-sphere $E$ and a framed $(k-1)$-sphere $F$, with the geometric intersection number is $|D_{E}\pitchfork F|=1$, where $D_{E}$ is the trivial $(n-k)$-ball of $E$ and $E\cap (\{0\}\times\mathbb{R}^3\times\{0\})$ is a visible $2$-disk bounded by the dotted circle.
 \label{A3}}
\end{figure}

\begin{figure}[ht!]
 \labellist
 \small\hair 2pt

 \pinlabel {$J^{t_1}_1$}  at 50 110
 \pinlabel {$J^{t_2}_2$}  at 110 110
 \pinlabel {$J^{t_m}_m$}  at 207 110
 \pinlabel {\large{$\mathcal{K}$}} at 110 30

 \pinlabel {$N^{t}_1$}  at 365 110
 \pinlabel {$N^{0}_2$}  at 425 110
 \pinlabel {$N^{0}_m$}  at 515 110
 \pinlabel {\large{$\mathcal{K}(t)$}} at 425 30
 
 \endlabellist
 \centering
 
 \includegraphics[width=0.7\textwidth]{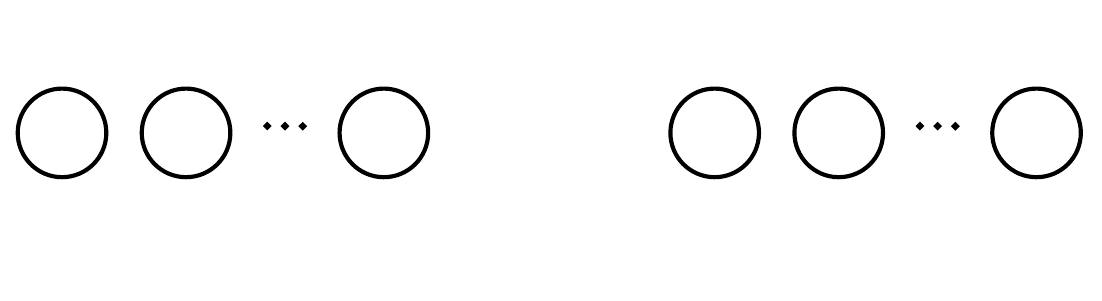}
 \caption{$(n,k)$-Kirby diagrams $\mathcal{K}$ and $\mathcal{K}(t)$. \label{A4}}
\end{figure}

\begin{theorem}\label{thm: classification of simple handlebodies}
    Fix $k\geq2$ and $n\geq2k+1$. Let $X$ be an $n$-dimensional $k$-handlebody with a $0$-handle and $m$ $k$-handles. Then $X$ is diffeomorphic to $M_{\mathcal{K}(t)}$ for some $(n,k)$-Kirby diagram $\mathcal{K}(t)$ in the right of \autoref{A4}. In particular, $M_{\mathcal{K}(t)}$ is the boundary connected sum of a $B^{n-k}$-bundle over $S^{k}$ and the trivial bundle $\natural^{m-1}(S^k\times B^{n-k})$.
\end{theorem}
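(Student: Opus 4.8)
The plan is to read off the $(n,k)$-Kirby diagram of $X$, normalize the attaching link to an unlink, and then run handle slides to collect the framing data onto a single handle. Since $X$ has only a $0$-handle and $m$ $k$-handles, its diagram has no dotted component ($L_1=\emptyset$) and consists of a framed $(k-1)$-link $L_2=J_1\cup\cdots\cup J_m\subset S^{n-1}$. First I would invoke \autoref{rmk: (n,k)-Kirby diagrams}(2): since $n-1\geq 2(k-1)+2$, the link $L_2$, being homotopic to the trivial $(k-1)$-link, is isotopic to the unlink by part $(2)$ of \autoref{thm: Whitney embedding theorems}, the framings being carried along by the isotopy. Thus $X\cong M_{\mathcal{K}}$, where $\mathcal{K}$ is the diagram on the left of \autoref{A4}: the $m$ unknotted, unlinked spheres $J_1^{t_1},\dots,J_m^{t_m}$ with framings $t_i\in G=\pi_{k-1}(O(n-k))$ as in \autoref{thm:nice framings}.

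Next I would reduce the framing vector $(t_1,\dots,t_m)\in G^m$ by handle slides. By the second row of \autoref{A3}, sliding the $t_i$-framed $J_i$ over the $t_j$-framed $J_j$ replaces $t_i$ by $t_i+t_j$ (and by $t_i-t_j$ after reversing the orientation of $J_j$), while leaving all other framings unchanged; since the link is an unlink in the stable range, a component and its parallel copy are unlinked, so such slides can be iterated and the result re-isotoped back to an unlink. These moves realize all elementary operations $t_i\mapsto t_i\pm t_j$ on the coordinates. I would then argue by cases on $G$: if $G=0$ there is nothing to do; if $G=\mathbb{Z}_2$ one clears every coordinate but at most one; and if $G=\mathbb{Z}$ the Euclidean algorithm (each step a finite sequence of $\pm$-slides) reduces to $(\gcd(t_1,\dots,t_m),0,\dots,0)$. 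In every case we reach $(t,0,\dots,0)$ for some $t\in G$, which is precisely the diagram $\mathcal{K}(t)$ on the right of \autoref{A4}; hence $X\cong M_{\mathcal{K}(t)}$.

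For the ``in particular'' statement I would use that an unlink lies in a disjoint union of balls in $S^{n-1}$, which exhibits $B^n$ as $\natural^m B^n$ with one $k$-handle attached in each summand, so $M_{\mathcal{K}(t)}$ is the boundary connected sum of the $m$ one-handle pieces. The piece carrying framing $t$ is, by definition of the framing, the $B^{n-k}$-bundle over $S^k$ with clutching function $t$, while each of the remaining $m-1$ pieces carries framing $0$ and is therefore $S^k\times B^{n-k}$ by \autoref{rmk: (n,k)-Kirby diagrams}(2)(c). This yields $M_{\mathcal{K}(t)}\cong (B^{n-k}\text{-bundle over }S^k)\,\natural^{m-1}(S^k\times B^{n-k})$, as claimed.

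The hard part will be justifying that the framing additivity of \autoref{A3} survives \emph{iterated} slides: I must ensure that after each band sum the attaching sphere can be returned to the standard unlink without disturbing the framings of the other components, and that the framing genuinely adds in $G$ rather than picking up a correction term. This is exactly where the stable-range hypothesis $n\geq 2k+1$ is essential—it kills linking between a component and its parallel copy and lets homotopy imply isotopy via \autoref{thm: Whitney embedding theorems}—and it is the one point I would write out with care; the subsequent reduction of $(t_1,\dots,t_m)$ is then routine linear algebra over $G$.
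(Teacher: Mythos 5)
Your proposal is correct and takes essentially the same route as the paper's proof: trivialize the framed $(k-1)$-link via the stable-range Whitney theorems (as in \autoref{rmk: (n,k)-Kirby diagrams}), reduce the framing vector to $(\gcd(t_1,\dots,t_m),0,\dots,0)$ by the handle slides of \autoref{A3} (the paper also allows crossing changes from \autoref{A1} to tidy the result), and identify $M_{\mathcal{K}(t)}$ with the stated boundary connected sum. If anything, you are more explicit than the paper at the two points it leaves implicit, namely the case analysis on $G$ for the Euclidean reduction and the clutching-function identification of each summand.
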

\begin{proof}
    Let $\mathcal{K}=L_1\cup L_2\subset S^{n-1}$ be an $(n,k)$-Kirby diagram of $X$. Since $L_1=\emptyset$, the diagram $\mathcal{K}$ consists solely of the trivial $(k-1)$-link. Thus, we can assume that \[\mathcal{K}=J^{t_1}_1\cup\cdots\cup J^{t_m}_m\subset S^{n-1},\] where $J^{t_i}_i$ is a $t_i$-framed $(k-1)$-knot and $t_i\in G$, as depicted in the left of \autoref{A4}. 
    
    Let \[t=\text{gcd}(t_1,\dots,t_m)\] be the greatest common divisor of the framings. By performing handle slides among the components of $\mathcal{K}$, we can rearrange them so that only one component of $\mathcal{K}$ remains a $t$-framed $(k-1)$-knot, while all other components have framing $0$. More precisely, the resulting $(n,k)$-Kirby diagram is \[\mathcal{K}(t)=N^t_1\cup N^0_{2}\cup\cdots\cup N^0_{m},\] as shown in the right of \autoref{A4}.
    
    If necessary, we can further resolve crossings by performing crossing changes, as described in \autoref{A1}.
    Therefore, $X$ is diffeomorphic to $M_{\mathcal{K}(t)}$, which, in turn, is diffeomorphic to the boundary connected sum of a $B^{n-k}$-bundle over $S^{k}$ and the trivial bundle $\natural^{m-1}(S^k\times B^{n-k})$.
\end{proof}

\begin{figure}[ht!]
 \labellist
 \small\hair 2pt
 \pinlabel {$p_1$}  at 145 240
 \pinlabel {$p_m$}  at 145 120
 \pinlabel {$L_1$}  at 50 100
 \pinlabel {$J^{t_1}_1$}  at 210 285
 \pinlabel {$J^{t_m}_m$}  at 210 170
 \pinlabel {\large{$\mathcal{K}=L_1\cup L_2$}} at 140 30
 
 \pinlabel {$p$}  at 363 172
 \pinlabel {$L_1$}  at 298 100
 \pinlabel {$N^{a}_1$}  at 418 250
 \pinlabel {$N^{s_2}_2$}  at 518 300
 \pinlabel {$N^{s_3}_3$}  at 518 230
 \pinlabel {$N^{s_m}_m$}  at 518 120
 \pinlabel {\large{$\mathcal{K}'=L_1\cup L_2'$}} at 410 30

 \pinlabel {$p$}  at 665 172
 \pinlabel {$L_1$}  at 600 100
 \pinlabel {$N^{a}_1$}  at 720 250
 \pinlabel {$E^{b}_2$}  at 820 300
 \pinlabel {$E^{0}_3$}  at 820 230
 \pinlabel {$E^{0}_m$}  at 820 120
 \pinlabel {\large{$\mathcal{K}(p;a,b)=L_1\cup L_2''$}} at 700 30

 \endlabellist
 \centering
 
 \includegraphics[width=0.9\textwidth]{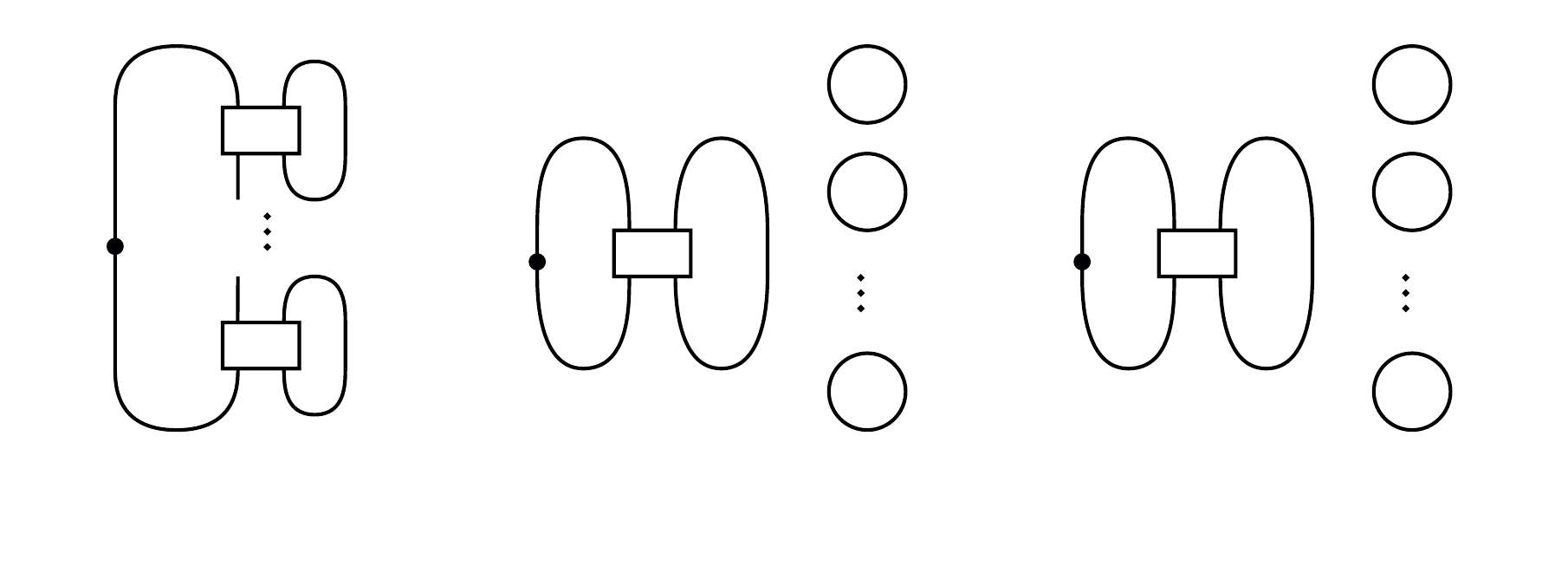}
 \caption{$(n,k)$-Kirby diagrams $\mathcal{K},\mathcal{K}'$, and $\mathcal{K}(p;a,b)$. \textbf{Left}: The diagram $\mathcal{K}=L_1\cup L_2\subset S^{n-1},$ where $L_1$ is a dotted $(n-k-1)$-sphere, and $L_2=J^{t_1}_{1}\cup \cdots \cup J^{t_m}_m$ is a framed $(k-1)$-link consisting of  $t_i$-framed $(k-1)$-knots $J^{t_i}_i$. The components $L_1$ and $L_2$ are linked, while $L_2$ itself forms the trivial $(k-1)$-link. Each $p_i$ denotes $p_i$-full twist. \textbf{Middle}: The modified diagram $\mathcal{K}'=L_1\cup L_2'\subset S^{n-1},$ where $L_2'=N^{a}_{1}\cup N^{s_2}_2\cup \cdots \cup N^{s_m}_m$ is a framed $(k-1)$-link with $N^{s_i}_i$ being an $s_i$-framed $(k-1)$-knot and $N^{a}_1$ an $a$-framed $(k-1)$-knot. 
 The components $L_1$ and $N^{a}_1$ are linked. Here, $p$ presents a $p$-full twist. \textbf{Right}: The simplified diagram $\mathcal{K}(p;a,b)=L_1\cup L_2''$, where $L_2''=N^{a}_{1}\cup E^{b}_2\cup E^{u_3}_3\cup \cdots \cup E^{u_m}_m$ is a framed $(k-1)$-link, with $u_3=\cdots=u_m=0$. \label{A5}}
\end{figure}

\classificationofcertainhandlebodies

\begin{proof}
    Let $\mathcal{K}=L_1\cup L_2\subset S^{n-1}$ be an $(n,k)$-Kirby diagram of $X$. Since $L_2$ is the trivial $(k-1)$-link, we can assume that $\mathcal {K}$ is the diagram in the left of \autoref{A5} after applying crossing changes and $\ddagger$-moves in \autoref{A1}, if necessary. 
    
    Here, $L_2=J_1^{t_1}\cup\cdots\cup J_m^{t_m}$, where $J_i^{t_i}$ is a $t_i$-framed $(k-1)$-knot, and $p_i$ indicates $p_i$-full twist. Let \[p=\text{gcd}(p_1,\dots,p_m)\] be the greatest common divisor of the full twists. By performing handle slides among the components of $L_2$, we can arrange them so that only one component of $L_2$ remains linked with $L_1$. That is, the resulting $(n,k)$-Kirby diagram is \[\mathcal{K}'=L_1\cup L_2',\] where \[L_2'=N_1^{a}\cup N^{s_2}_2\cup \cdots\cup N_m^{s_m}\] as shown in the middle of \autoref{A5}, with $a,s_i\in G$. 
    
    Next, we further perform handle slides among the components of $L_2'\setminus N_1=N_2\cup\cdots\cup N_m$ so that only one component has a framing \[b=\text{gcd}(s_2,\dots,s_m),\] while the other components have framings $0$. This results in the $(n,k)$-Kirby diagram \[\mathcal{K}(p;a,b)=L_1\cup L_2'',\] where \[L_2''=N_1^{a}\cup E_2^{b}\cup E^0_3\cup\cdots\cup E_m^{0}\] as depicted in the right of \autoref{A5}. Thus, $X$ is diffeomorphic to $M_{\mathcal{K}(p;a,b)}$.
\end{proof}

\begin{remark} See the right of \autoref{A5}.
\begin{enumerate}
    \item When $p=0$, we can perform handle slides between $N_1$ and $N_2$ until the framing of $E_2$ becomes $0$. Therefore, we can assume that $b=0$ in this case.
    \item When $p=1$, we can delete the cancelling $(k-1,k)$-pair $L_1\cup N_1$.
    \item The $(k-1)$-th homotopy group of $M_{\mathcal{K}(p;a,b)}$ is
    \begin{equation*}
\pi_{k-1}(M_{\mathcal{K}(p;a,b)})\cong
    \begin{cases}
        \mathbb{Z}&\hspace{5mm}\text{if}\;\; p=0 \\
        \{0\} & \hspace{5mm}\text{if}\;\; p=1  \\
        \mathbb{Z}_p & \hspace{5mm}\text{if}\;\; p\geq2.
    \end{cases}
\end{equation*}
    \item When $k=3,5,6$ or $7$ $(\text{mod}\;8)$, there is a unique framing of each $(k-1)$-sphere. Therefore, $\mathcal{K}(p;a,b)$ depends only on $p$.
\end{enumerate}
\end{remark}

\begin{remark}\label{rem: relation between (4,2) and (n,2)}
Let $X$ be an $n$-dimensional $2$-handlebody, where $n\geq5.$ By \autoref{thm: product structure on handlebodies}, there exists a $4$-dimensional $2$-handlebody $Y$ such that $X=Y\times B^{n-4}$.

Let $\mathcal{K}=L_1\cup L_2\subset S^3$ be a Kirby diagram (or $(4,2)$-Kirby diagram) of $Y$. Then the induced $(n,2)$-Kirby diagram \[\tilde{\mathcal{K}}=\tilde{L_1}\cup \tilde{L_2}\subset S^{n-1}\] is an $(n,2)$-Kirby diagram of $X$. 

Let $J\subset L_2$ be an $m$-framed $1$-knot, a component of $L_2$, where $m\in \mathbb{Z}$. An $m$-framed $1$-knot $J$ in $S^3$ is a pair $(J,\phi)$, where \[\phi:S^1\times B^2\hookrightarrow S^3\] is an embedding such that
\begin{enumerate}
    \item $\phi(S^1\times\{(0,0)\})=J$,
    \item $\phi(S^1\times B^{2})=\nu(J)$,
    \item the linking number between $J$ and $J'=\phi(S^1\times\{(1,0)\})$ is $\text{lk}(J,J')=m$.
\end{enumerate} 

Note that the induced homomorphism \[i_{*}:\mathbb{Z}\cong\pi_1(O(2))\rightarrow\pi_1(O(n-2))\cong\mathbb{Z}_2\] is an epimorphism, where $i$ is the natural inclusion of orthogonal groups; see \autoref{lem:induced map}. Then the induced $1$-knot $\tilde{J}\subset\tilde{L_1}$ is $u$-framed, where $u\in\{0,1\}=\mathbb{Z}_2$ and $u\equiv m$ (mod $2$); see also \autoref{rmk: induced framings}. 

Now, the induced $n$-manifold $M_{\tilde{\mathcal{K}}}$ is diffeomorphic to $M_{\mathcal{K}}\times B^{n-4}$, allowing us to interpret the $n$-manifolds represented by the diagrams in \autoref{A6}. 

For example, the fifth diagram in \autoref{A6} represents $L(p,1)^{\circ}\times B^1$ as a $(4,2)$-Kirby diagram, so it represents $L(p,1)^{\circ}\times B^{n-3}$ as an $(n,2)$-Kirby diagram, where $L(p,1)^{\circ}$ is the punctured lens space $L(p,1)$. The third diagram in \autoref{A6} represents the punctured $\mathbb{C}P^2$ as a $(4,2)$-Kirby diagram, so it represents the non-trivial $B^{n-2}$-bundle over $S^2$, denoted by $S^2\tilde{\times}B^{n-2}$. Note that there are only two possible $B^{n-2}$-bundles over $S^2$; one is $S^2\times B^{n-2}$ and the other is $S^2\tilde{\times} B^{n-2}$. This classification follows from the fact that $\pi_1(O(n-2))\cong\mathbb{Z}_2$ when $k\geq2$ and $n\geq2k+1$. 
\end{remark}

\begin{definition}
    Let $\mathcal{K}=L_1\cup L_2$ and $\mathcal{K}'=L_1\cup L_2'$ be $(4,2)$-Kirby diagrams, where the two framed links \[L_2=J_1^{t_1}\cup\cdots\cup J_m^{t_m}\quad \text{and} \quad L_2'=J_1^{s_1}\cup\cdots\cup J_m^{s_m}\] equal as links. If $s_i\equiv t_i$ (mod 2) for every $i\in\{0,\dots,m\}$, we say that $\mathcal{K}$ and $\mathcal{K}'$ are \textit{weakly equivalent}.
\end{definition}

\begin{example}
    Let $\mathcal{K}\subset S^3$ be the $(-1)$-framed trefoil knot and $\mathcal{K'}\subset S^3$ be the $2025$-framed trefoil knot. Then they are weakly equivalent. Clearly, the induced $(n,2)$-Kirby diagrams $\tilde{\mathcal{K}}$ and $\tilde{\mathcal{K}'}$ represent diffeomorphic $n$-manifolds because $-1\equiv2025$ (mod 2).
\end{example}

\begin{remark}
    Let $X$ and $X'$ be $n$-dimensional $2$-handlebodies, where $n\geq 5$. By \autoref{thm: product structure on handlebodies}, there exist $4$-dimensional $2$-handlebodies $Y$ and $Y'$ such that \[X=Y\times B^{n-4}\quad \text{and} \quad X'=Y'\times B^{n-4}.\] Let $\mathcal{K}$ and $\mathcal{K}'$ be $(4,2)$-Kirby diagrams of $Y$ and $Y'$, respectively. Then the induced $(n,2)$-Kirby diagrams $\tilde{\mathcal{K}}$ and $\tilde{\mathcal{K}'}$ are $(n,2)$-Kirby diagrams of $X$ and $X'$.
    
    If $\mathcal{K}$ ane $\mathcal{K}'$ are related by the usual Kirby moves (isotopies, handle slides, and annihilation/creation of cancelling pair), by crossing changes as in \autoref{A1}, or if they are weakly equivalent, then the induced $n$-dimensional $k$-handlebodies $M_{\tilde{\mathcal{K}}}$ and $M_{\tilde{\mathcal{K}'}}$ are diffeomorphic.
\end{remark}

\begin{example}\hfill
\begin{enumerate}
    \item The $(-1)$-framed trefoil knot and the third diagram in \autoref{A6} are weakly equivalent after performing a crossing change on the $(-1)$-framed trefoil knot.
    \item The seventh diagram in \autoref{A6} is obtained from the sixth diagram by handle sliding $0$-framed component over the other component and performing a crossing change.
\end{enumerate}
\end{example}

\begin{figure}[ht!]
 \labellist
 \small\hair 2pt
 \pinlabel {\large{$S^1\times B^{n-1}$}}  at 75 480
 \pinlabel {\large{$S^2\times B^{n-2}$}}  at 235 480
 \pinlabel {\large{$S^2\tilde{\times} B^{n-2}$}}  at 398 480
 \pinlabel {\large{$B^n$}}  at 110 250
 \pinlabel {\large{$L(p,1)^{\circ}\times B^{n-3}$}}  at 365 250
 \pinlabel {\large{$(S^2\times B^{n-2})\natural (S^2\tilde{\times}B^{n-2})$}}  at 110 50
 \pinlabel {\large{$(S^2\tilde{\times} B^{n-2})\natural (S^2\tilde{\times}B^{n-2})$}}  at 365 50
 
 \pinlabel {$0$}  at 277 580
 \pinlabel {$1$}  at 440 580
 \pinlabel {$t\in\{0,1\}$}  at 180 390
 \pinlabel {$p$}  at 363 340
 \pinlabel {$0$}  at 430 390
 \pinlabel {$0$}  at 90 160
 \pinlabel {$1$}  at 195 160
 \pinlabel {$1$}  at 345 160
 \pinlabel {$1$}  at 450 160

 \endlabellist
 \centering
 
 \includegraphics[width=.7\textwidth]{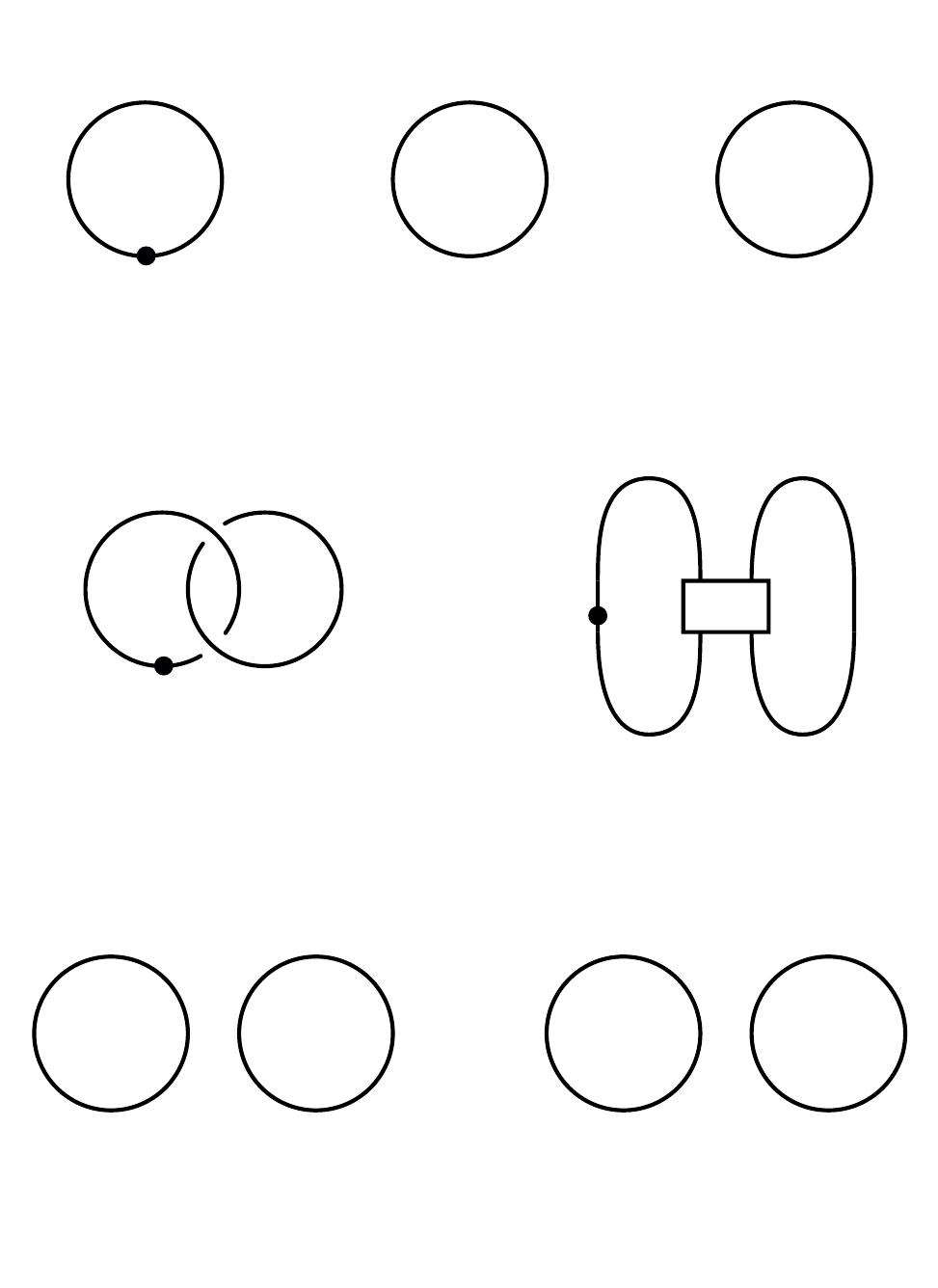}
 \caption{$(n,2)$-Kirby diagrams of some $n$-dimensional $2$-handlebodies. \label{A6}}
\end{figure}

\bibliographystyle{alpha} 
\bibliography{refs} 

\end{document}